\def\R{\hbox{\font\dubl=msbm10 scaled 1100 {\dubl R}}}
\def\N{\hbox{\font\dubl=msbm10 scaled 1100 {\dubl N}}}
\def\Re{{\rm{Re}}\,}
\newtheorem{Theorem}{Theorem}
\newtheorem{Corollary}[Theorem]{Corollary}
\newtheorem{Proposition}[Theorem]{Proposition}
\newtheorem{Lemma}[Theorem]{Lemma}
\title[Negative values]{Negative values of the Riemann zeta function on the critical line}
\author[Justas Kalpokas, Maxim A. Korolev, J\"orn Steuding]{Justas Kalpokas, Maxim A. Korolev, J\"orn Steuding}
\thanks{The first author is supported by grant No MIP-94 from the
 Research Council of Lithuania, as well he would like to say thank you to his friend Gintautas Aidietis for support.}
\date{\today}
\begin{document}

\begin{abstract}
We investigate the intersections of the curve $\mathbb{R}\ni t\mapsto \zeta({1\over 2}+it)$ with the real axis. We show unconditionally that the zeta-function takes arbitrarily large positive and negative values on the critical line.
\end{abstract}

\maketitle

{\small \noindent {\sc Keywords:} Riemann zeta-function, value-distribution, critical line\\
{\sc  Mathematical Subject Classification:} 11M06}
\\ \bigskip


\section{Introduction and statement of the main results}

We investigate the value-distribution of the Riemann zeta-function $\zeta(s)$ on the critical line $s={1\over 2}+i\R$. Recently Soundararajan \cite{sou} succeeded to prove 
\begin{equation}\label{oomega}
\max_{t\in[T,2T]}\left|\zeta({\textstyle{\frac12}}+it)\right|\gg\exp\left((1+o(1))\sqrt{\frac{\log T}{\log\log T}}\right)\mbox{\quad as\quad  } T\rightarrow \infty.
\end{equation}
This result as well as all other $\Omega$-estimates for the Riemann zeta function does not provide special information about the location of the large values of the Riemann zeta function in the complex plane. 
 For this purpose we study the set of intersection points of the curve $t\mapsto \zeta({1\over 2}+it)$ with straight lines $e^{i\phi}\R$ through the origin and prove upper and lower bounds for associated discrete moments. In particular, we prove that $\zeta({1\over 2}+it)$ takes arbitrarily large positive values as well as arbitrarily large negative values. 
\smallskip

Recall the functional equation for the zeta-function,
\begin{equation}\label{feq}
\zeta(s)=\Delta(s)\zeta(1-s),\qquad \mbox{where}\quad \Delta(s):=2^s\pi^{s-1}\Gamma(1-s)\sin({\textstyle{\pi s\over 2}}).
\end{equation}
It follows immediately that $\Delta(s)\Delta(1-s)=1$, hence $\Delta({1\over 2}+it)$ lies on the unit circle for real $t$.  Given
an angle $\phi\in[0,\pi)$, denote by $t_n(\phi)$ with $n\in\N$ the positive roots of the equation
$$
e^{2i\phi}=\Delta({\textstyle{1\over 2}}+it)
$$
in ascending order. These roots correspond to intersections of the curve $t\mapsto \zeta({1\over 2}+it)$ with straight lines $e^{i\phi}\R$ through the origin (see Kalpokas and Steuding \cite{Kalp}). Of special interest are intersections with the real line; in this case $\phi=0$ and the roots are called Gram points (after Gram \cite{gram} who observed that the first of those roots separate consecutive zeta zeros on the critical line). 
\par

Building on work of Rudnick and Soundararajan \cite{Sound}, resp. Milinovich and Ng \cite{Mili} we shall establish a lower bound of the expected order for those discrete moments with arbitrary rational exponents:

\begin{Theorem}\label{Th1}
For any rational $k\geqslant 1$ and any $\phi\in[0,\pi)$, as $T\to\infty$,
$$
\sum_{0<t_n(\phi)\leqslant T}\left|\zeta\left({\textstyle{\frac12}}+it_n(\phi)\right)\right|^{2k}\gg T(\log T)^{k^2+1}.
$$
\end{Theorem}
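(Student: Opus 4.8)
The plan is to run the lower‑bound method of Rudnick and Soundararajan \cite{Sound} over the discrete set of points $t_n(\phi)$, in a discretised form reminiscent of Milinovich and Ng \cite{Mili}. Write $\theta(t)$ for the Riemann--Siegel theta‑function; since $\zeta(\tfrac12+it)=\Delta(\tfrac12+it)\zeta(\tfrac12-it)=\Delta(\tfrac12+it)\overline{\zeta(\tfrac12+it)}$ for real $t$, the factor $\Delta(\tfrac12+it)$ has modulus $1$ and equals $e^{-2i\theta(t)}$, so $e^{2i\phi}=\Delta(\tfrac12+it)$ is equivalent to $\theta(t)\equiv-\phi\pmod\pi$; thus, apart from finitely many, the $t_n(\phi)$ are exactly the positive solutions of $\theta(t)=\pi n-\phi$, and as $\theta'(t)=\tfrac12\log\tfrac{t}{2\pi}+O(1/t)$ they have density $\tfrac1{2\pi}\log\tfrac{t}{2\pi}$. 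Smoothing the cut‑off at $T$ (the difference being negligible) and applying Poisson summation in $n$ after the substitution $x=(\theta(t)+\phi)/\pi$ yields, for a reasonable weight $g$,
$$
\sum_{0<t_n(\phi)\le T}g(t_n(\phi))=\frac1\pi\int_0^T g(t)\,\theta'(t)\,dt+\frac1\pi\sum_{\nu\neq0}e^{-2i\nu\phi}\int_0^T g(t)\,\Delta(\tfrac12+it)^{\nu}\,\theta'(t)\,dt+(\text{negligible}),
$$
the terms with $\nu\neq0$ being oscillatory corrections.

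Fix a rational $k=a/b\ge1$ with coprime integers $a\ge b\ge1$, put $y=T^{\eta}$ with $\eta=\eta(k)>0$ small, and set
$$
C(t):=\sum_{m\le y}\frac{d_{1/b}(m)}{\sqrt m}\,m^{-it},
$$
where $d_{1/b}$ is the nonnegative multiplicative function generated by $\zeta(s)^{1/b}$, so that $d_{1/b}^{\ast a}=d_{a/b}=d_k$ and $d_{1/b}^{\ast(a-b)}=d_{k-1}$. Taking a $b$‑th root is exactly what makes $C^{a-b}\overline{C}^{\,a}$ and $|C|^{2a}=C^{a}\overline{C}^{\,a}$ honest (generalised) Dirichlet polynomials of length $\le y^{2a}<T$, with logarithmic frequencies $O(\eta\log T)$. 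Consider the twisted discrete first moment
$$
\mathcal M_1:=\sum_{0<t_n(\phi)\le T}\zeta(\tfrac12+it_n(\phi))\,C(t_n(\phi))^{a-b}\,\overline{C(t_n(\phi))}^{\,a}.
$$
Hölder's inequality with exponents $2k$ and $\tfrac{2k}{2k-1}$, using $\bigl|C^{a-b}\overline{C}^{\,a}\bigr|^{2k/(2k-1)}=|C|^{2a}$, gives
$$
|\mathcal M_1|\le\Bigl(\sum_{0<t_n(\phi)\le T}\bigl|\zeta(\tfrac12+it_n(\phi))\bigr|^{2k}\Bigr)^{1/(2k)}\Bigl(\sum_{0<t_n(\phi)\le T}\bigl|C(t_n(\phi))\bigr|^{2a}\Bigr)^{(2k-1)/(2k)},
$$
so the theorem follows once we prove
$$
\mathcal M_1\gg T(\log T)^{k^2+1}\qquad\text{and}\qquad\sum_{0<t_n(\phi)\le T}\bigl|C(t_n(\phi))\bigr|^{2a}\ll T(\log T)^{k^2+1}.
$$

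Both are obtained from the summation formula. In the second sum the weight $|C|^{2a}$ carries no zeta‑factor, and as its logarithmic frequencies are $O(\eta\log T)$ while $\theta'(t)\asymp\log T$, the phases in the $\nu\neq0$ integrals have no stationary point in the bulk $[\sqrt T,T]$; repeated integration by parts against the smoothed cut‑off then makes these $O(T^{1-\delta})$, so the sum is $\ll(\log T)\int_0^T|C(t)|^{2a}\,dt\ll T(\log T)^{k^2+1}$ by the mean value theorem for Dirichlet polynomials (the coefficient of $C^a$ at $r$ being $d_k(r)/\sqrt r$ for $r\le y$). For $\mathcal M_1$ one inserts into the $\nu=0$ term the approximate functional equation $\zeta(\tfrac12+it)=\sum_{n\le\sqrt{t/2\pi}}n^{-1/2-it}+\Delta(\tfrac12+it)\sum_{n\le\sqrt{t/2\pi}}n^{-1/2+it}+O(t^{-1/4})$; the $\Delta$‑part is oscillatory without a stationary point in the bulk and is $O(T^{1-\delta})$, while the diagonal $nu=v$ of the remaining part (with $n$ the Dirichlet‑series index and $u,v$ those of $C^{a-b}$ and $\overline{C}^{\,a}$) yields
$$
\frac{1+o(1)}{\pi}\,\theta(T)\sum_{nu=v}\frac{d_{k-1}(u)\,d_k(v)}{v}=\frac{1+o(1)}{2\pi}\,T\log T\sum_{v\le y}\frac{(\mathbf{1}\ast d_{k-1})(v)\,d_k(v)}{v}=\frac{1+o(1)}{2\pi}\,T\log T\sum_{v\le y}\frac{d_k(v)^2}{v},
$$
using the arithmetic identity $\mathbf{1}\ast d_{k-1}=d_k$; since $d_k\ge0$ this is positive and of order $T(\log T)^{k^2+1}$. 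The off‑diagonal $nu\neq v$ is $O(T^{1-\delta})$ once one invokes $\int(v/nu)^{it}\log\tfrac{t}{2\pi}\,dt\ll(\log T)/|\log(v/nu)|$ together with the shortness of $C^{a-b}\overline{C}^{\,a}$, and the $\nu\neq0$ terms of the summation formula, rewritten via the functional equation as $\zeta(\tfrac12+it)\Delta(\tfrac12+it)^{\nu}=e^{-2i(\nu+1)\theta(t)}\,\overline{\zeta(\tfrac12+it)}$, contribute $O(T(\log T)^{k^2})$ (a secondary main term from $\nu=-1$, where the exponential drops out, of strictly smaller order) plus $O(T^{1-\delta})$ (from all other $\nu$, whose phases have no stationary point in the bulk). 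Hence $\mathcal M_1\gg T(\log T)^{k^2+1}$, and feeding the two displayed bounds into the Hölder inequality gives $\sum_{0<t_n(\phi)\le T}|\zeta(\tfrac12+it_n(\phi))|^{2k}\gg T(\log T)^{k^2+1}$.

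The delicate part is controlling the oscillatory contributions, and this is exactly where the functional equation and the size $\theta'(t)\sim\tfrac12\log(t/2\pi)$ are used: after rewriting $\zeta\Delta^{\nu}$ through the functional equation, the exponential phases $e^{-2i(\nu+1)\theta(t)}$ with $\nu\neq-1,0$ have no stationary point against short Dirichlet polynomials, so repeated integration by parts against the smoothed cut‑off makes them negligible. One has to balance this against taking $\eta$ small enough (in terms of $k$) for the off‑diagonal of the twisted first moment and the length $y^{2a}$ in the mean value of $|C|^{2a}$ to stay below $T$, while keeping $\eta$ bounded away from $0$ so that $\sum_{v\le y}d_k(v)^2/v\asymp(\log T)^{k^2}$. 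The only genuinely arithmetic ingredient is the convolution identity $\mathbf{1}\ast d_{k-1}=d_k$, which produces the exponent $k^2$; the extra factor $\log T$ is forced by the density $\tfrac1{2\pi}\log(t/2\pi)$ of the $t_n(\phi)$, entering through the weight $\theta'(t)$.
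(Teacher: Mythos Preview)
Your approach is essentially the same as the paper's: you choose the identical Dirichlet polynomial (a $b$-th root of a divisor polynomial, raised to the $a$-th and $(a-b)$-th powers, where $k=a/b$), apply H\"older with exponents $2k$ and $2k/(2k-1)$, and extract the main term via the convolution identity $\mathbf{1}\ast d_{k-1}=d_k$ together with $\sum_{n\le y}d_k(n)^2/n\asymp(\log y)^{k^2}$. The secondary term of size $T(\log T)^{k^2}$ you find from $\nu=-1$ corresponds exactly to the paper's $\Sigma_1$ with exponent $(k-1)(k+1)=k^2-1$.

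The only genuine difference is the device for evaluating the discrete sums over $t_n(\phi)$. The paper writes these as contour integrals of $F(s)\,\Delta'(s)/(\Delta(s)-e^{2i\phi})$ around a rectangle, expands $(\Delta-e^{2i\phi})^{-1}$ as a geometric series in $\Delta$ on the right edge (where $|\Delta|<1$), and obtains clean asymptotics via Lemma~\ref{NgLemma} (a variant of Ng's lemma). You instead parametrise $t_n(\phi)$ by $\theta(t)=\pi n-\phi$ and apply Poisson summation, producing the same $\nu=0$ main integral and the same oscillatory corrections $\int g(t)\,\Delta(\tfrac12+it)^\nu\,\theta'(t)\,dt$. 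These are two standard and essentially equivalent routes; the contour-integral version has the advantage that the error analysis (off-diagonal, higher $\nu$) is packaged once and for all in Proposition~\ref{Prop1}, whereas your stationary-phase/integration-by-parts claims for $\nu\neq 0,-1$ combined with the approximate functional equation for $\zeta$ are stated rather than carried out and would need the condition $\eta<1/(2a)$ to ensure no stationary point lies in the bulk $[\sqrt{T},T]$. With that caveat, your argument is correct and matches the paper's proof.
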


\noindent Continuing recent work of Kalpokas and Steuding \cite{Kalp}, we shall derive an asymptotic formula for the third discrete moment:

\begin{Theorem}\label{Th2}
For any $\phi\in[0,\pi)$, as $T\to\infty$,
\begin{eqnarray*}
\lefteqn{\sum_{0<t_n^\phi\leqslant T}\zeta\left({\textstyle{\frac12}}+it_n(\phi)\right)^3}\\
&=&2e^{3i\phi}(\cos\phi)\,\frac{T}{2\pi}P_{3}\left(\log\frac{T}{2\pi}\right)+2e^{3i\phi}(\cos{3\phi})\,\frac{T}{2\pi}\log\frac{T}{2\pi
e}+O\left(T^{{1\over 2}+\varepsilon}\right),
\end{eqnarray*}
where $P_{3}$ is a computable polynomial of degree three.
\end{Theorem}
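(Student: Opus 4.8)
\medskip
\noindent\textbf{Proof strategy.}
The plan is to extend the contour--integration technique that Kalpokas and Steuding \cite{Kalp} used for the square to the cube. Write $\Delta(\tfrac12+it)=e^{-2i\theta(t)}$ with $\theta$ the Riemann--Siegel theta function, so that the roots $t_n(\phi)$ are exactly the solutions of $\theta(t)=\pi n-\phi$ (the Gram points when $\phi=0$); their counting function is $N_\phi(T)=\tfrac1\pi\theta(T)+O(1)=\tfrac{T}{2\pi}\log\tfrac{T}{2\pi e}+O(1)$, and the identity $\theta'(t)=-\tfrac12(\log\Delta)'(\tfrac12+it)$ holds. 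First I would reduce the discrete sum to continuous moments (the $O(1)$ many $t_n(\phi)$ below a fixed $T_0>1$ being harmless). Inserting the approximate functional equation for $\zeta(\tfrac12+it_n(\phi))^3$, using $\Delta(\tfrac12+it_n(\phi))^3=e^{6i\phi}$, and summing the resulting Dirichlet monomials over the $t_n(\phi)$ --- either by Poisson summation in $n$ (using $\theta(t_n(\phi))=\pi n-\phi$), or by the argument principle applied to $\Delta(s)-e^{2i\phi}$, whose zeros all lie on $\Re s=\tfrac12$ for large $t$ because $\Delta(s)=e^{2i\phi}$ forces $|\Delta(s)|=1$ --- one is led to
\begin{equation*}
\sum_{0<t_n(\phi)\le T}\zeta\left(\tfrac12+it_n(\phi)\right)^3=\frac1\pi\sum_{m\in\mathbb Z}e^{-2im\phi}J_m+O\left(T^{1/2+\varepsilon}\right),\qquad J_m:=\int_{T_0}^{T}\zeta\left(\tfrac12+it\right)^3\Delta\left(\tfrac12+it\right)^m\theta'(t)\,dt.
\end{equation*}
By the functional equation $\zeta(\tfrac12+it)\Delta(\tfrac12+it)^{-1}=\zeta(\tfrac12-it)$, for $-3\le m\le 0$ one has $J_m=\int_{T_0}^{T}\zeta(\tfrac12+it)^{3+m}\zeta(\tfrac12-it)^{-m}\theta'(t)\,dt$; a stationary--phase inspection shows that only $m\in\{0,-1,-2,-3\}$ produce main terms, and $J_{-2}=\overline{J_{-1}}$, $J_{-3}=\overline{J_0}$ since $\theta'$ is real.

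The secondary term of the theorem comes from $J_0=\int_{T_0}^{T}\zeta(\tfrac12+it)^3\theta'(t)\,dt$ (and from $J_{-3}$). Using $\theta'(t)=-\tfrac12(\log\Delta)'(\tfrac12+it)$, this equals $\tfrac i2\int\zeta(s)^3\tfrac{\Delta'(s)}{\Delta(s)}\,ds$ along the segment from $\tfrac12+iT_0$ to $\tfrac12+iT$, and I would move the line of integration to $\Re s=1+\delta$; no singularity is crossed (the pole at $s=1$ lies below $T_0$). On the new line $\zeta(s)^3=\sum_n d_3(n)n^{-s}$ converges, $\tfrac{\Delta'}{\Delta}(s)=-\log\tfrac{t}{2\pi}+O(1/t)$, the term $n=1$ gives $\tfrac12\,T\log\tfrac{T}{2\pi e}$ and the terms $n\ge 2$ contribute $O(\log T)$, while the two horizontal connecting segments are $\ll T^{1/2+\varepsilon}$ by Weyl's subconvex bound $\zeta(\tfrac12+iT)\ll T^{1/6+\varepsilon}$. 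Hence $J_0=\tfrac12\,T\log\tfrac{T}{2\pi e}+O(T^{1/2+\varepsilon})$, and likewise $J_{-3}$ (its main term is real). This is exactly the origin of the error term in the theorem.

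The degree--three polynomial comes from $J_{-1}=\int_{T_0}^{T}\zeta(\tfrac12+it)^2\zeta(\tfrac12-it)\theta'(t)\,dt$ (and $J_{-2}=\overline{J_{-1}}$). Here I would insert the approximate functional equations for $\zeta(\tfrac12+it)^2$ (divisor coefficients $d(n)$) and for $\zeta(\tfrac12-it)$ (trivial coefficients), multiply out, and integrate against $\theta'(t)$. The diagonal contributions together with the relevant stationary--phase contributions --- those whose saddle sits at integer argument, so that the phase there is constant --- assemble, through $\sum_{n\le x}\tfrac{d(n)}{n}=\tfrac12(\log x)^2+2\gamma\log x+\cdots$ weighted by $\theta'(t)\asymp\tfrac12\log t$ and integrated in $t$, into $\tfrac12\,T\,P_3(\log\tfrac{T}{2\pi})$ with $P_3$ computable of degree three. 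The remaining off--diagonal contributions are shifted convolutions of the divisor function with the trivial function, i.e.\ essentially incomplete sums $\sum_{n\le y}d(n)=y\log y+(2\gamma-1)y+O(y^{1/2+\varepsilon})$, absorbed partly into $P_3$ and otherwise into $O(T^{1/2+\varepsilon})$. Substituting back and using $e^{2i\phi}+e^{4i\phi}=2e^{3i\phi}\cos\phi$ and $1+e^{6i\phi}=2e^{3i\phi}\cos 3\phi$ gives
\begin{equation*}
\sum_{0<t_n(\phi)\le T}\zeta\left(\tfrac12+it_n(\phi)\right)^3=2e^{3i\phi}(\cos\phi)\frac{T}{2\pi}P_3\left(\log\frac{T}{2\pi}\right)+2e^{3i\phi}(\cos 3\phi)\frac{T}{2\pi}\log\frac{T}{2\pi e}+O\left(T^{1/2+\varepsilon}\right).
\end{equation*}

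The hard part will be the error analysis. Making the reduction of the first paragraph rigorous with error only $O(T^{1/2+\varepsilon})$, and likewise controlling the off--diagonal terms in $J_{-1}$, requires handling sums over the long Dirichlet polynomials (of length a fixed power of $T$) produced by the approximate functional equations, together with the horizontal pieces of the contour; the standard device is to extract their boundary values at height $T$, exploit the resulting oscillation in $T$ via mean--value (large--sieve) estimates for Dirichlet polynomials --- in the spirit of Rudnick--Soundararajan \cite{Sound} and Milinovich--Ng \cite{Mili} --- and then pass back to the pointwise statement using that $N_\phi(T)$ is a step function. This bookkeeping, the cube analogue of the square computation carried out in \cite{Kalp}, together with the input $\zeta(\tfrac12+iT)\ll T^{1/6+\varepsilon}$, is what pins the error exponent at $\tfrac12$.
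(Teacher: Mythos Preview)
Your decomposition is essentially the same as the paper's. The paper also uses the argument principle for $\Delta(s)-e^{2i\phi}$, and the geometric expansion of $(\Delta(s)-e^{2i\phi})^{-1}$ on the two vertical edges of the rectangle is exactly your Poisson expansion into the $J_m$; the four surviving terms $m\in\{0,-1,-2,-3\}$ correspond to the paper's $j_1,j_2$ (right edge) together with their conjugates from the left edge. Your evaluation of $J_0$ --- shift to $\Re s>1$, pick off the $n=1$ term of $\zeta(s)^3$ --- is precisely the paper's treatment of $j_2$ via Lemma~\ref{NgLemma}.

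The one substantive difference is how you handle $J_{-1}$. You propose to stay on the critical line, open $\zeta(\tfrac12+it)^2$ and $\zeta(\tfrac12-it)$ by approximate functional equations, and then work through diagonal terms and shifted divisor sums. The paper does something much shorter: since $\zeta(\tfrac12+it)^2\zeta(\tfrac12-it)=\zeta(\tfrac12+it)^3\Delta(\tfrac12-it)$, one shifts the same contour to $\Re s=a>1$ and faces $\frac{1}{2\pi i}\int\zeta^3(s)\Delta(1-s)\tfrac{\Delta'}{\Delta}(s)\,ds$. Gonek's lemma (Lemma~\ref{gonekl} here, with $f=\zeta^3$, $\alpha_n=d_3(n)$, $m=1$) evaluates this directly as $-\sum_{n\le T/2\pi}d_3(n)\log n + O(T^{1/2+\varepsilon})$, and partial summation with the $d_3$-asymptotic produces $-\tfrac{T}{2\pi}P_3(\log\tfrac{T}{2\pi})$. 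No AFE, no shifted convolutions. Your route would also work, but it is the long way around; the stationary-phase content you are aiming for is already packaged inside Gonek's lemma.

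Your last paragraph is off target. The citations to Rudnick--Soundararajan and Milinovich--Ng and the appeal to mean-value/large-sieve estimates belong to the proof of Theorem~\ref{Th1}, not Theorem~\ref{Th2}; the paper uses no such input here. The only ingredients controlling the error are (i) Gonek's lemma for $j_1$, (ii) the elementary estimate behind Lemma~\ref{NgLemma} for $j_2$, (iii) the convexity/Weyl bound $\zeta(\sigma+iT)\ll T^{(1-\sigma)/3}\log T$ for the horizontal segments, and (iv) the lower bound $|\Delta(s)-e^{2i\phi}|\gg 1$ on the top edge, obtained by choosing $T=\tfrac12(t_\nu(\phi)+t_{\nu+1}(\phi))$. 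If you drop the large-sieve language and replace your AFE plan for $J_{-1}$ by the contour shift plus Gonek's lemma, your sketch becomes the paper's proof.
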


Combining the above theorems we shall deduce that there are arbitrary large positive and negative values of the Riemann zeta-function on the critical line. More generally, we shall show that the corresponding statement holds with respect to any direction $\phi$. Recall that $e^{-i\phi}\zeta(\frac12+it_n(\phi))$ is real. Hence, we may write $t_n^+(\phi)$ in place of $t_n(\phi)$ if $e^{-i\phi}\zeta(\frac12+it_n(\phi))\geqslant 0$ and  $t_n^-(\phi)$ if $e^{-i\phi}\zeta(\frac12+it_n(\phi))<0$.

\begin{Corollary}\label{Cor1}
For any $\phi\in[0,\pi)$, there are arbitrary large positive and negative values of $e^{-i\phi}\zeta({1\over 2}+it_n(\phi))$. More precisely,
$$
\max_{0<t_n^{\pm}(\phi)\leqslant T}\left|\zeta(\textstyle{\frac12}+it_n^{\pm}(\phi))\right|\gg (\log T)^{\frac54}.
$$
If Riemann hypothesis is assumed for any arbitrary small $\delta >0$ we have 
 $$
\max_{0<t_n^{\pm}(\phi)\leqslant T}\left|\zeta(\textstyle{\frac12}+it_n^{\pm}(\phi))\right|\gg (\log T)^{\frac32-\delta}.
$$

\end{Corollary}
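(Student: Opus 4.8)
The plan is to exploit the gap between the lower bound of Theorem~\ref{Th1} for the \emph{absolute} cubic moment and the upper bound of Theorem~\ref{Th2} for the \emph{signed} cubic sum. Put $a_n:=e^{-i\phi}\zeta(\tfrac12+it_n(\phi))$, which is real, so that $|a_n|=|\zeta(\tfrac12+it_n(\phi))|$; in the notation of the statement, $t_n(\phi)$ is one of the $t_n^+(\phi)$ exactly when $a_n\geqslant 0$ and one of the $t_n^-(\phi)$ exactly when $a_n<0$. Write
\[
P:=\sum_{0<t_n^+(\phi)\leqslant T}a_n^{3},\qquad N:=\sum_{0<t_n^-(\phi)\leqslant T}|a_n|^{3},\qquad V^{\pm}:=\max_{0<t_n^{\pm}(\phi)\leqslant T}\bigl|\zeta(\tfrac12+it_n^{\pm}(\phi))\bigr|,
\]
so $P,N\geqslant 0$. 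The whole Corollary will follow once we show that both $P$ and $N$ are $\gg T(\log T)^{13/4}$.

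The first input is $P+N=\sum_{0<t_n(\phi)\leqslant T}|\zeta(\tfrac12+it_n(\phi))|^{3}\gg T(\log T)^{13/4}$, which is Theorem~\ref{Th1} at the rational value $k=\tfrac32$ (legitimate since $\tfrac32$ is rational and $\geqslant 1$, and then $k^{2}+1=\tfrac{13}{4}$). The second input is $P-N=\sum_{0<t_n(\phi)\leqslant T}a_n^{3}=e^{-3i\phi}\sum_{0<t_n(\phi)\leqslant T}\zeta(\tfrac12+it_n(\phi))^{3}$, so Theorem~\ref{Th2} gives $|P-N|\ll T(\log T)^{3}$ (the main term is $O(T(\log T)^{3})$ since $P_{3}$ has degree three, the error is $O(T^{1/2+\varepsilon})$, and this holds for every $\phi$, including $\cos\phi=0$ where the main term vanishes outright). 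Because $(\log T)^{13/4}$ dominates $(\log T)^{3}$, the identities $P=\tfrac12\bigl((P+N)+(P-N)\bigr)$ and $N=\tfrac12\bigl((P+N)-(P-N)\bigr)$ immediately yield $P\gg T(\log T)^{13/4}$ and $N\gg T(\log T)^{13/4}$.

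For the unconditional estimate I would now peel off the maximum. Since $0\leqslant a_n\leqslant V^{+}$ on the points entering $P$,
\[
T(\log T)^{13/4}\ll P=\sum_{0<t_n^+(\phi)\leqslant T}a_n\cdot a_n^{2}\leqslant V^{+}\sum_{0<t_n(\phi)\leqslant T}|\zeta(\tfrac12+it_n(\phi))|^{2}\ll V^{+}\,T(\log T)^{2},
\]
the last step being the second discrete moment bound of Kalpokas and Steuding \cite{Kalp} (or, self-containedly: $\int_0^T|\zeta(\tfrac12+it)|^{2}\d t\ll T\log T$ together with the subharmonicity of $|\zeta|^{2}$, applied on discs of sufficiently small radius $\asymp 1/\log T$ about the points $\tfrac12+it_n(\phi)$, which are pairwise disjoint because consecutive $t_n(\phi)$ differ by an amount of size $\asymp 1/\log t_n$). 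Hence $V^{+}\gg(\log T)^{5/4}$, and the identical computation with $N$ in place of $P$ gives $V^{-}\gg(\log T)^{5/4}$.

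Under the Riemann hypothesis I would run the same split with the exponent $2$ replaced by a parameter $s\in(2,3)$: from $0\leqslant a_n\leqslant V^{+}$ one has $P\leqslant (V^{+})^{3-s}\sum_{0<t_n(\phi)\leqslant T}|\zeta(\tfrac12+it_n(\phi))|^{s}$, and if one disposes of the fractional discrete moment bound
\[
\sum_{0<t_n(\phi)\leqslant T}|\zeta(\tfrac12+it_n(\phi))|^{s}\ll_{\varepsilon}T(\log T)^{s^{2}/4+1+\varepsilon}\qquad(2<s<3),
\]
then $(V^{+})^{3-s}\gg(\log T)^{13/4-s^{2}/4-1-\varepsilon}$, i.e.\ $V^{+}\gg(\log T)^{(3+s)/4-\varepsilon/(3-s)}$; letting $s\to 3^{-}$ gives $V^{+}\gg(\log T)^{3/2-\delta}$ for every $\delta>0$, and likewise for $V^{-}$. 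The step I expect to be the genuine obstacle is precisely this last displayed bound for $2<s<3$: for $s=2$ it is unconditional, but letting $s\to 3$ is what forces RH into the picture. One would obtain it by transferring the conditional upper bounds for $\int_0^T|\zeta(\tfrac12+it)|^{2k}\d t$ (in the style of Soundararajan) to the average over the $t_n(\phi)$ — for instance by the same subharmonic comparison as above, which converts a moment bound on the line into one over the $t_n(\phi)$ at the price of a single extra $\log T$ — with RH entering through the control of the size of $\zeta$ on the critical line. (It is in fact plausible that for $2\leqslant s\leqslant 4$ this bound already follows unconditionally from Heath--Brown's fractional moment estimates, which would dispense with RH in the second assertion; I will not pursue that here.) Every other step is a soft consequence of Theorems~\ref{Th1} and~\ref{Th2}.
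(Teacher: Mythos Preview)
Your argument is correct and coincides with the paper's: split the cubic sum into its positive and negative parts, use Theorem~\ref{Th1} at $k=\tfrac32$ for $P+N$ and Theorem~\ref{Th2} for $P-N$ to get $P,N\gg T(\log T)^{13/4}$, then peel off the maximum against the second (resp.\ fractional) moment. The only point where the paper differs is that for the conditional step it simply invokes the bound $\sum_{t_n(\phi)\leqslant T}|\zeta(\tfrac12+it_n(\phi))|^{2k}\ll T(\log T)^{k^2+1+\varepsilon}$ from Christ--Kalpokas~\cite{Christ} rather than sketching a subharmonic transfer from Soundararajan's continuous moments; your sketch is viable and leads to the same exponent.
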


If we are interested only in extreme values on a given straight line without information on which of the two half-lines the value is taken, we can obtain estimates comparable to (\ref{oomega}) due to Soundararajan \cite{sou} except for the imaginary axis:

\begin{Corollary}\label{Cor2}
Let $\phi\neq{\pi\over 2}$ and $\phi\in[0,\pi)$, then
$$
\max_{0<t_n(\phi)\leq T}\left|\zeta({\textstyle{\frac12}}+it_n(\phi))\right|\gg\exp\left(\left({1\over 2}+o(1)\right)\sqrt{\frac{\log T}{\log\log T}}\right).
$$
\end{Corollary}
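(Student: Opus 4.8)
The plan is to run Soundararajan's resonance argument \cite{sou} over the discrete set $\{t_n(\phi)\}$ instead of over the interval $[T,2T]$. The one feature that makes the transfer possible is that on these points a single application of the functional equation is trivial: since $\Delta(\tfrac12+it_n(\phi))=e^{2i\phi}$, equation \eqref{feq} at $s=\tfrac12+it_n(\phi)$ reads $\zeta(\tfrac12+it_n(\phi))=e^{2i\phi}\,\overline{\zeta(\tfrac12+it_n(\phi))}$, whence
$$
\zeta\!\left(\tfrac12+it_n(\phi)\right)^{2}=e^{2i\phi}\left|\zeta\!\left(\tfrac12+it_n(\phi)\right)\right|^{2}.
$$
Thus for any nonnegative weights $w_n$ one has $\sum_n w_n\,|\zeta(\tfrac12+it_n(\phi))|^{2}=\bigl|\sum_n w_n\,\zeta(\tfrac12+it_n(\phi))^{2}\bigr|$: the left side is an honest weighted mean of $|\zeta|^{2}$, while the right side, being built from $\zeta^{2}$, is accessible to the discrete mean-value machinery already used for Theorems \ref{Th1} and \ref{Th2} (Rudnick--Soundararajan \cite{Sound}, Milinovich--Ng \cite{Mili}, Kalpokas--Steuding \cite{Kalp}). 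Squaring is essential: writing $Z(t)=e^{i\vartheta(t)}\zeta(\tfrac12+it)$ for the Hardy function, with $\vartheta$ the Riemann--Siegel theta function, the points $t_n(\phi)$ are spaced $\sim 2\pi/\log t$ apart and satisfy $e^{2i\vartheta(t_n(\phi))}=e^{-2i\phi}$, so $\zeta(\tfrac12+it_n(\phi))=(-1)^{n}e^{i\phi}Z(t_n(\phi))$ carries an alternating sign and $\sum_n\zeta(\tfrac12+it_n(\phi))\,w_n$ is small by cancellation; the Cauchy--Schwarz bootstrap that is available for continuous averages is therefore not available here.

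Concretely I would argue as follows. Let $R(t)=\sum_{n\le N}r(n)\,n^{-it}$ be Soundararajan's resonator, with nonnegative multiplicatively-structured coefficients and length $N=T^{\theta}$, $\theta>0$ a small constant, and put $w_n=|R(t_n(\phi))|^{2}$, so that
$$
\max_{0<t_n(\phi)\le T}\left|\zeta\!\left(\tfrac12+it_n(\phi)\right)\right|^{2}\;\ge\;
\frac{\bigl|\sum_{0<t_n(\phi)\le T}\zeta(\tfrac12+it_n(\phi))^{2}\,|R(t_n(\phi))|^{2}\bigr|}{\sum_{0<t_n(\phi)\le T}|R(t_n(\phi))|^{2}}.
$$
The denominator is routine: the $t_n(\phi)$ have counting density $\tfrac1{2\pi}\log\tfrac t{2\pi}+O(1)$ (from $\Delta(\tfrac12+it)=e^{-2i\vartheta(t)}$ and $\vartheta'(t)=\tfrac12\log\tfrac t{2\pi}+O(1/t)$), and since the frequencies of $|R|^{2}$ are $o(\log T)$ and hence non-resonant, summation by parts gives $\sum_{0<t_n(\phi)\le T}|R(t_n(\phi))|^{2}\sim\tfrac1{2\pi}\int_{0}^{T}|R(t)|^{2}\log\tfrac t{2\pi}\,dt$. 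For the numerator I would insert the approximate functional equation at $s=\tfrac12+it_n(\phi)$, using $\Delta(\tfrac12+it_n(\phi))=e^{2i\phi}$ to write $\zeta(\tfrac12+it_n(\phi))=E(t_n(\phi))+e^{2i\phi}\,\overline{E(t_n(\phi))}+O\bigl(t_n(\phi)^{-1/4}\bigr)$ with $E(t)=\sum_{m\le\sqrt{t/2\pi}}m^{-1/2-it}$; then $\zeta(\tfrac12+it_n(\phi))^{2}=E^{2}+2e^{2i\phi}|E|^{2}+e^{4i\phi}\overline{E}^{2}+(\text{small})$. The middle term contributes $2e^{2i\phi}\sum_n|E(t_n(\phi))R(t_n(\phi))|^{2}$, a resonance-enhanced main term which (up to the phase $e^{2i\phi}$) is nonnegative; since $|ER|^{2}$ has frequencies bounded by $(\tfrac12+\theta)\log T<\log T$ it is non-resonant with the density of the $t_n(\phi)$, so this sum has modulus $\sim\tfrac1\pi\int_0^T|E(t)R(t)|^{2}\log\tfrac t{2\pi}\,dt$, and bounding this from below by Soundararajan's evaluation of $\int_0^T E(t)|R(t)|^{2}\,dt$ together with Cauchy--Schwarz yields $\gg\exp\bigl((2\sqrt\theta+o(1))\sqrt{\log T/\log\log T}\bigr)\sum_{0<t_n(\phi)\le T}|R(t_n(\phi))|^{2}$. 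The terms $E^{2}|R|^{2}$ and $e^{4i\phi}\overline{E}^{2}|R|^{2}$ produce a comparable resonance-enhanced contribution carrying the phase $1+e^{4i\phi}=2e^{2i\phi}\cos 2\phi$; combining, the numerator equals $e^{2i\phi}$ times a quantity $\gg_\phi\cos^{2}\phi\cdot\exp\bigl((2\sqrt\theta+o(1))\sqrt{\log T/\log\log T}\bigr)\sum_{0<t_n(\phi)\le T}|R(t_n(\phi))|^{2}$, the off-diagonal and approximate-functional-equation error terms being of smaller order. Dividing and taking square roots gives, for $\phi\neq\tfrac\pi2$,
$$
\max_{0<t_n(\phi)\le T}\left|\zeta\!\left(\tfrac12+it_n(\phi)\right)\right|\;\gg_\phi\;\exp\!\left((\sqrt\theta+o(1))\sqrt{\tfrac{\log T}{\log\log T}}\right),
$$
and optimizing $\theta$ against the error-term constraint (whose admissible range turns out to be $\theta\le\tfrac14$) produces the exponent $\tfrac12$.

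The crux is the discrete mean-value estimate above. In contrast to Soundararajan's continuous argument, the genuinely new obstacle is the oscillatory part of the counting function of the $t_n(\phi)$: the exponential sum $\sum_{0<t_n(\phi)\le T}x^{it_n(\phi)}$ has, besides its $x=1$ diagonal, secondary stationary-phase main terms located near $x\asymp(t/2\pi)^{-k}$, and one must show that once $|R|^{2}$ is inserted these do not swamp the diagonal — it is precisely the size of these terms (together with the approximate-functional-equation error, summed over $\asymp T\log T$ points) that limits the admissible resonator length to $N=T^{\theta}$ with $\theta\le\tfrac14$, and this limitation, not any loss inside the resonance argument, is what reduces Soundararajan's constant $1$ to $\tfrac12$. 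The second delicate point is tracking the dependence on $\phi$ finely enough to isolate the factor $\cos^{2}\phi$; this is exactly what forces the exclusion of $\phi=\tfrac\pi2$, where the ``$\zeta(s)$'' and ``$\zeta(1-s)$'' contributions cancel in the main term, just as the main term of Theorem \ref{Th2} vanishes on the imaginary axis ($\cos\phi=\cos 3\phi=0$).
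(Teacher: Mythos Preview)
Your overall resonance strategy matches the paper's, but you take an unnecessarily complicated route, and the justification you give for it --- that the first-power sum $\sum_{n}\zeta(\tfrac12+it_{n}(\phi))\,w_{n}$ is ``small by cancellation'' because of the alternating factor $(-1)^{n}$ in $\zeta(\tfrac12+it_{n}(\phi))=(-1)^{n}e^{i\phi}Z(t_{n}(\phi))$ --- is wrong. The product $(-1)^{n}Z(t_{n}(\phi))$ does \emph{not} oscillate in sign: Gram's law and its analogues say precisely that $(-1)^{n}Z(t_{n})>0$ for the majority of $n$, so the first-power weighted sum is large, not small. Proposition~\ref{Prop1} makes this quantitative: taking $X=Y$ and real $x_{n}=y_{n}$, one gets
\[
S_{1}(T)=\sum_{0<t_{n}(\phi)\le T}\zeta\bigl(\tfrac12-it_{n}(\phi)\bigr)\bigl|X(\tfrac12+it_{n}(\phi))\bigr|^{2}
=(1+e^{-2i\phi})\,\frac{T}{2\pi}\Bigl(\log\frac{T}{2\pi e}\Bigr)\sum_{mn\le X}\frac{x_{m}x_{mn}}{mn}+O(R_{1}),
\]
whose main term is nonzero exactly when $\phi\ne\tfrac{\pi}{2}$. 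The paper's proof is then immediate: the trivial bound $|S_{1}|\le S_{2}\,\max_{n}|\zeta(\tfrac12+it_{n}(\phi))|$, Soundararajan's coefficients $x_{n}=\sqrt{n}\,f(n)$ with $X=T^{1/4-\varepsilon}$, and his estimate for the ratio $\sum_{mn\le X}f(m)f(mn)n^{-1/2}\big/\sum_{n\le X}f(n)^{2}$. No approximate functional equation, no $E^{2}$ versus $|E|^{2}$ decomposition, and no passage discrete $\to$ continuous $\to$ Cauchy--Schwarz are needed; the exclusion $\phi\ne\tfrac{\pi}{2}$ comes transparently from the factor $1+e^{-2i\phi}$.

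Your $\zeta^{2}$ route might be salvageable, but as sketched it has a genuine gap: you assert that $\sum_{n}E^{2}|R|^{2}$ over the discrete set produces a resonance-enhanced contribution of the \emph{same} order as $\sum_{n}|ER|^{2}$, with exactly the phase needed to yield the factor $\cos^{2}\phi$. But these two sums are governed by entirely different mechanisms --- stationary-phase resonance of the frequencies $\log(m_{1}m_{2})$ against the spacing of the $t_{n}$ in the first case, versus the ordinary diagonal $m_{1}a=m_{2}b$ in the second --- and you give no argument for why they should match in size. Even granting this, the resonator-length constraint you identify ($\theta\le\tfrac14$) is the same as the paper's, so you would recover the same constant $\tfrac12$ with considerably more effort.
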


\noindent The paper is organized as follows. In the next section we provide some preliminary results. In Section 3 we prove the key Proposition \ref{Prop1} which leads to both, Theorem \ref{Th2} and Corollary \ref{Cor2}. Section 4 contains the proof of Theorem \ref{Th1} and Corollary \ref{Cor1}.

\section{Preliminaries}

Recall the function $\Delta(s)$ defined in (\ref{feq}). It is well-known that
\begin{equation}\label{L2}
\Delta(\sigma+it)=\left({\vert t\vert\over 2\pi}\right)^{{\textstyle{1\over 2}}-\sigma-it}\exp(i(t+{\textstyle{\pi\over 4}}))(1+O(\vert t\vert^{-1}))\qquad\mbox{for}\quad \vert t\vert\geqslant 1
\end{equation}
uniformly for any $\sigma$ from a bounded interval. Hence,
\begin{equation}\label{star}
{1\over \Delta(s)-e^{2i\phi}}={-e^{-2i\phi}\over 1-e^{-2i\phi}\Delta(s)}=-e^{-2i\phi}\left(1+\sum_{k=1}^\infty e^{-2ki\phi}\Delta(s)^k\right)
\end{equation}
for $\sigma>{1\over 2}$. Obviously, $\Delta({1\over 2}+it)$ is a complex number from the unit circle for $t\in\R$. Moreover, $\Delta'({\textstyle{1\over 2}}+it)$ is non-vansishing for sufficiently large $t$ as follows from the asymptotic formula
\begin{equation}\label{delt}
{\Delta'\over \Delta}(\sigma+it)=-\log{\vert t\vert\over 2\pi}+O(\vert t\vert^{-1})\qquad\mbox{for}\quad \vert t\vert\geqslant 1.
\end{equation}

Next we introduce certain Dirichlet polynomials
\begin{equation}\label{DirPol}
X(s)=\sum_{n\leqslant X}\frac{x_n}{n^s},\quad Y(s)=\sum_{m\leqslant Y}\frac{y_m}{m^s},
\end{equation}
where $X, Y \leqslant T$. Moreover, we define the following quantities
\[
\mathcal{X}_{0}=\max_{n\leqslant X}|x_{n}|,\quad \mathcal{Y}_{0}=\max_{m\leqslant Y}|y_{m}|,\quad \mathcal{X}_{1} =
\sum\limits_{n\leqslant X}\frac{|x_{n}|}{n},\quad \mathcal{Y}_{1} =\sum\limits_{m\leqslant Y}\frac{|y_{m}|}{m}.
\]
and we set
\[
X_{1}(s) = \sum\limits_{n\leqslant X}\frac{\overline{x}_{n}}{n^{s}},\quad Y_{1}(s) =\sum\limits_{m\leqslant Y}\frac{\overline{y}_{m}}{m^{s}}.
\]

We shall use a variation of Lemma 5.1 from Ng \cite{Ng}:

\begin{Lemma}\label{NgLemma} 
Suppose the series $f(s) = \sum_{n =1}^{\infty}\alpha_{n}n^{-s}$ converges absolutely for $\Re s > 1$ and $\sum_{n = 1}^{\infty}|\alpha_{n}|n^{-\sigma}\ll (\sigma -1)^{-\,\gamma}$ for some $\gamma \geqslant 0$ as $\sigma\to 1+0$. Next, let $X(s)$ and $Y(s)$ be Dirichlet polynomials as defined in (\ref{DirPol}). Then, uniformly for  $a\in(1,2]$,
\begin{align*}
&J\,=\,\frac{1}{2\pi i}\int_{a+i}^{a+iT}f(s)X(s)Y(1-s)\frac{\Delta'}{\Delta}(s)\,ds\\
&\quad = -\frac{T}{2\pi}\biggl(\log \frac{T}{2\pi e}\biggr)\sum_{\substack{m\leqslant X\\ mn \leqslant Y}}\frac{\alpha_{n}x_{m}
y_{mn}}{mn}+O\left(\frac{Y^{a}(\log T)^{2}\mathcal{X}_{0}\mathcal{Y}_{0}}{(a-1)^{\gamma+1}}\right),
\end{align*}
where the implicit constant is absolute.
\end{Lemma}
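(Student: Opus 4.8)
The plan is to evaluate the integral directly on the vertical line $\Re s=a$, where all the series involved converge absolutely, by expanding the integrand into a Dirichlet series and separating off the diagonal. Writing $s=a+it$ and $Y(1-s)=\sum_{m\leqslant Y}y_m m^{s-1}$, I would first record
\[
f(s)X(s)Y(1-s)=\sum_{n\geqslant 1}\ \sum_{j\leqslant X}\ \sum_{m\leqslant Y}\frac{\alpha_n x_j y_m}{m}\Big(\frac{m}{nj}\Big)^{s},
\]
a triple series which, for fixed $a>1$, converges absolutely and uniformly in $t\in[1,T]$: the modulus of a general term does not depend on $t$, and the sum of moduli factors as $\big(\sum_n|\alpha_n|n^{-a}\big)\big(\sum_{j\leqslant X}|x_j|j^{-a}\big)\big(\sum_{m\leqslant Y}|y_m|m^{a-1}\big)<\infty$. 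Since $\frac{\Delta'}{\Delta}(a+it)$ is continuous and $\ll\log T$ on $[1,T]$, one may integrate term by term, so that $J=\frac1{2\pi}\sum_{n,j,m}\frac{\alpha_n x_j y_m}{m}\big(\frac{m}{nj}\big)^{a}\int_1^T\big(\frac{m}{nj}\big)^{it}\frac{\Delta'}{\Delta}(a+it)\,dt$; then I would insert the expansion (\ref{delt}), namely $\frac{\Delta'}{\Delta}(a+it)=-\log\frac{t}{2\pi}+O(t^{-1})$, which is uniform for $a$ in the bounded interval $(1,2]$.

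The $O(t^{-1})$ remainder is harmless already in absolute value: summed over \emph{all} $n,j,m$ its contribution is $\ll(\log T)\,\mathcal X_0\mathcal Y_0\big(\sum_n|\alpha_n|n^{-a}\big)\,\zeta(a)\sum_{m\leqslant Y}m^{a-1}\ll(\log T)\,\mathcal X_0\mathcal Y_0\,Y^{a}(a-1)^{-\gamma-1}$, which lies within the claimed bound. It then remains to handle $-\frac1{2\pi}\sum_{n,j,m}\frac{\alpha_n x_j y_m}{m}\big(\frac{m}{nj}\big)^{a}\int_1^T\big(\frac{m}{nj}\big)^{it}\log\frac{t}{2\pi}\,dt$. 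The diagonal terms $m=nj$ (equivalently $nj\leqslant Y$ with $j\leqslant X$), using $\int_1^T\log\frac{t}{2\pi}\,dt=T\log\frac{T}{2\pi e}+O(1)$, produce
\[
-\frac{T}{2\pi}\Big(\log\frac{T}{2\pi e}\Big)\sum_{\substack{j\leqslant X\\ nj\leqslant Y}}\frac{\alpha_n x_j y_{nj}}{nj}\ +\ O\bigg(\sum_{\substack{j\leqslant X\\ nj\leqslant Y}}\frac{|\alpha_n|\,|x_j|\,|y_{nj}|}{nj}\bigg);
\]
the first term is exactly the asserted main term after relabelling $j$ by $m$, while the error is $\ll\mathcal X_0\mathcal Y_0\big(\sum_{n\leqslant Y}|\alpha_n|n^{-1}\big)\log Y\ll\mathcal X_0\mathcal Y_0\,Y^{a-1}(\log T)(a-1)^{-\gamma}$ (using $n^{-1}\leqslant n^{-a}Y^{a-1}$ for $n\leqslant Y$ and the hypothesis on $f$), again admissible.

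The off-diagonal terms $m\neq nj$ are where the work lies. Integration by parts in $t$ (using $\log\frac{t}{2\pi}\ll\log T$ on $[1,T]$ and $\int_1^T t^{-1}\,dt\ll\log T$) gives $\int_1^T\big(\frac{m}{nj}\big)^{it}\log\frac{t}{2\pi}\,dt\ll\frac{\log T}{|\log(m/(nj))|}\ll\frac{(\log T)\max(m,nj)}{|m-nj|}$, the last step by the elementary inequality $\log(1+x)\geqslant x/(1+x)$ for $x\geqslant 0$. Putting $\ell=nj$ and $b_\ell=\sum_{j\mid\ell,\,j\leqslant X}|\alpha_{\ell/j}|\leqslant\sum_{d\mid\ell}|\alpha_d|$, the off-diagonal part of $J$ is
\[
\ll(\log T)\,\mathcal X_0\mathcal Y_0\sum_{\ell\geqslant 1}\frac{b_\ell}{\ell^{a}}\sum_{\substack{m\leqslant Y\\ m\neq\ell}}m^{a-1}\,\frac{\max(m,\ell)}{|m-\ell|}.
\]
When $m$ and $\ell$ are far apart ($m\leqslant\ell/2$ or $m\geqslant 2\ell$) the ratio $\max(m,\ell)/|m-\ell|$ is $\leqslant 2$, the inner sum is $\ll Y^{a}$, and $\sum_\ell b_\ell\ell^{-a}\leqslant\zeta(a)\sum_d|\alpha_d|d^{-a}\ll(a-1)^{-\gamma-1}$, so this part is $\ll Y^{a}(\log T)(a-1)^{-\gamma-1}\mathcal X_0\mathcal Y_0$. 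In the remaining ``close'' range $\ell/2<m<2\ell$, $m\neq\ell$ (which forces $\ell<2Y$), one has $m\asymp\ell$, so the inner sum is $\ll\ell^{a}\sum_{1\leqslant r<\ell}r^{-1}\ll\ell^{a}\log T$, whence this part is $\ll(\log T)^2\mathcal X_0\mathcal Y_0\sum_{\ell<2Y}b_\ell$, and finally $\sum_{\ell<2Y}b_\ell\leqslant\sum_{d<2Y}|\alpha_d|\lfloor 2Y/d\rfloor\leqslant 2Y\sum_{d<2Y}|\alpha_d|d^{-1}\ll Y^{a}(a-1)^{-\gamma}$. Combining the diagonal main term with all of these error contributions then yields the lemma.

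The main obstacle is precisely this ``close'' off-diagonal range, where the oscillatory-integral bound $\max(m,\ell)/|m-\ell|$ can be as large as $\ell$. The point is that one must \emph{not} estimate $b_\ell$ pointwise there (that would cost a factor of $\ell$ and give an inadmissible $Y^{2}$ in place of $Y^{a}$), but rather exploit that $b_\ell$ is summed only over $\ell<2Y$, where $\sum_{\ell<2Y}b_\ell\ll Y^{a}(a-1)^{-\gamma}$ on average; everything else is the routine divisor-sum bookkeeping underlying Lemma 5.1 of Ng \cite{Ng}.
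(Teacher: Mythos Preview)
Your proof is correct and follows essentially the same route as the paper: expand on the line $\Re s=a$, insert the asymptotic $\frac{\Delta'}{\Delta}(a+it)=-\log\frac{t}{2\pi}+O(t^{-1})$, separate the diagonal $m=nj$, and handle the off-diagonal via integration by parts followed by a far/close dichotomy. The only cosmetic difference is in the close range: the paper sets $r=k+\nu$, swaps the order of summation over $\nu$ and $k$, and bounds the shifted inner sum by the full series $\sum_r\beta_r r^{-a}\ll(a-1)^{-\gamma-1}$, whereas you drop the weight $\ell^{-a}$ and bound $\sum_{\ell<2Y}b_\ell\ll Y^{a}(a-1)^{-\gamma}$ directly; both land inside the stated error term.
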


\begin{proof}
Changing the order of summation and the integration, we get
\[
J = \sum\limits_{n = 1}^{\infty}\sum\limits_{m\leqslant X,\,k\leqslant Y}\biggl(\frac{k}{mn}\biggr)^{a}\frac{\alpha_{n}x_{m}y_{k}}{k}\,\frac{1}{2\pi}\int_{1}^{T}\frac{\Delta'}{\Delta}(a+it)\biggl(\frac{k}{mn}\biggr)^{it}dt.
\]
Next, the contribution of $O$ - term from (\ref{delt}) to $J$ does not exceed in order
\[
\mathcal{X}_{0}\mathcal{Y}_{0}(\log T)\sum\limits_{n =1}^{\infty}\frac{|\alpha_{n}|}{n^{a}}\sum\limits_{m\leqslant
X}\frac{1}{m^a}\sum\limits_{k\leqslant Y}k^{a-1}\ll \frac{Y^{a}(\log T)^{2}\mathcal{X}_{0}\mathcal{Y}_{0}}{(a-1)^{\gamma}}.
\]
Extracting the diagonal term (when $k = mn$) in the above expression for $J$, we get
\[
J =\biggl(-\int_{1}^{T}\log\frac{t}{2\pi}\frac{dt}{2\pi}\biggr)\sum_{\substack{m\leqslant X\\ mn \leqslant Y}}\frac{\alpha_{n}x_{m}
y_{mn}}{mn}+O(J_{1})+O\biggl(\frac{Y^{a}(\log T)^{2}\mathcal{X}_{0}\mathcal{Y}_{0}}{(a-1)^{\gamma}}\biggr),
\]
where
\begin{eqnarray*}
J_{1}&=& \sum\limits_{n =1}^{\infty}\sum\limits_{\substack{m\leqslant X,\,k\leqslant Y \\ mn\ne k}}\biggl(\frac{k}{mn}\biggr)^{a}\frac{|\alpha_{n}x_{m}y_{k}|}{k}\,|j_{k,mn}|\\ &\le& Y^{a-1}\mathcal{X}_{0}\mathcal{Y}_{0}\sum\limits_{n =1}^{\infty}\sum\limits_{\substack{m\leqslant X,\,k\leqslant Y \\ mn\ne k}}\frac{|\alpha_{n}|}{(mn)^{a}}\,|j_{k,mn}|
\end{eqnarray*}
with
$$
j_{k,r}=\int_{1}^{T}\biggl(\log\frac{t}{2\pi}\biggr)\biggl(\frac{k}{r}\biggr)^{it}\frac{dt}{2\pi}.
$$
Integrating by parts shows for $k\ne r$ that
\[
|j_{k,r}| =\biggl|\int_{1}^{T}\frac{\log(t/(2\pi))}{2\pi\log{(k/r)}}d\biggl(\frac{k}{r}\biggr)^{it}\biggr|\,\le\,\frac{2}{\pi}\log\frac{T}{2\pi}\biggl|\log\frac{k}{r}\biggr|^{-1}.
\]
Setting $r = mn$, $\beta_{r} = \sum_{n|r}|\alpha_{n}|$ in the expression for $J_{1}$, we have
\[
J_{1}\ll Y^{a-1}(\log T)\mathcal{X}_{0}\mathcal{Y}_{0}\sum\limits_{k\leqslant Y}\sum\limits_{r =1}^{\infty}\frac{\beta_{r}}{r^{a}}\biggl|\log\frac{k}{r}\biggr|^{-1}.
\]
Recall that $\zeta(s)$ has a simple pole at $s=1$. Thus, the contribution of the terms with $r\leqslant k/2$ and $r>3k/2$ does
not exceed in order
\[
Y^{a-1}(\log T)\mathcal{X}_{0}\mathcal{Y}_{0}\cdot Y\sum\limits_{r =1}^{\infty}\frac{\beta_{r}}{r^{a}}\,\ll\,Y^{a}\mathcal{X}_{0}\mathcal{Y}_{0}(\log T)\sum\limits_{n =1}^{\infty}\frac{|\alpha_{n}|}{n^{a}}\sum\limits_{m =1}^{\infty}\frac{1}{m^{a}}\,\ll\,\frac{Y^{a}(\log T)\mathcal{X}_{0}\mathcal{Y}_{0}}{(a-1)^{\gamma +1}}.
\]
For $k/2<r\leqslant 3k/2$, $r\ne k$ we set $r = k+\nu$; since $|\log(k/r)|^{-1}\ll k|\nu|^{-1}$, the corresponding part of $J_{1}$
can be estimated as follows:
\begin{eqnarray*}
\lefteqn{Y^{a-1}(\log T)\mathcal{X}_{0}\mathcal{Y}_{0}\sum\limits_{k\leqslant Y}\sum\limits_{0<|\nu|\leqslant k/2}\frac{\beta_{k+\nu}}{(k+\nu)^{a}}\,\frac{k}{|\nu|}}\\
&\ll& Y^{a}(\log T)\mathcal{X}_{0}\mathcal{Y}_{0}\sum\limits_{0<|\nu|\leqslant Y/2}\frac{1}{|\nu|}\sum\limits_{2|\nu|\leqslant k\leqslant Y}\frac{\beta_{k+\nu}}{(k+\nu)^{a}}\\
&\ll& Y^{a}(\log T)^{2}\mathcal{X}_{0}\mathcal{Y}_{0}\sum\limits_{k =1}^{\infty}\frac{\beta_{k}}{k^{a}}\ll \frac{Y^{a}(\log
T)^{2}\mathcal{X}_{0}\mathcal{Y}_{0}}{(a-1)^{\gamma+1}}.
\end{eqnarray*}
The lemma is proved.
\end{proof}

Our next lemma is a variation of Gonek's lemma:

\begin{Lemma}\label{gonekl}
Suppose the series $f(s) = \sum_{n=1}^{\infty}\alpha_{n}n^{-s}$ converges absolutely in the half-plane $\Re s > 1$, $\sum_{n =1}^{\infty}|\alpha_{n}|n^{-\sigma}\ll (\sigma - 1)^{-\,\gamma}$ for some $\gamma\geqslant 0$ as $\sigma\to 1+0$ and $\alpha_{n}\ll
n^{\varepsilon}$ for any $\varepsilon >0$. Then we have, for any fixed integer $m\geqslant 0$ and $c\geqslant 1$ uniformly for $a\in(1,2]$,
\begin{eqnarray*}
\lefteqn{\frac{1}{2\pi i}\int_{a+ic}^{a+iT}f(s)\Delta(1-s)\biggl(\frac{\Delta'}{\Delta}(s)\biggr)^{m}\,ds}\\
&=&(-1)^{m}\sum\limits_{n\leqslant \frac{T}{2\pi}}\alpha_{n}(\log n)^{m}\,+\,O\biggl(\frac{T^{a-{1\over 2}}}{(a-1)^{\gamma}}\,(\log T)^{m}+T^{{1\over 2}+\varepsilon}\biggr).
\end{eqnarray*}
\end{Lemma}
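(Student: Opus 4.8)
The plan is to run Gonek's classical argument for integrals of the shape $\int f(s)\Delta(1-s)\,ds$ along a vertical segment, now carrying along the extra factor $\bigl(\frac{\Delta'}{\Delta}(s)\bigr)^{m}$ through its asymptotic expansion (\ref{delt}), and keeping explicit track of the dependence on $a$ by means of the growth hypothesis $\sum_{n}|\alpha_{n}|n^{-\sigma}\ll(\sigma-1)^{-\gamma}$.

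First I would write $f(s)=\sum_{n\geqslant1}\alpha_{n}n^{-s}$, absolutely convergent on $\Re s=a>1$, and interchange the finite integration over $[a+ic,a+iT]$ with the summation over $n$ (legitimate, since on the segment all remaining factors are bounded). There $t\geqslant c\geqslant1$, so (\ref{L2}) yields
$$
\Delta(1-s)=\frac{1}{\Delta(s)}=\Bigl(\frac{t}{2\pi}\Bigr)^{a-\frac12}\exp\!\Bigl(it\log\frac{t}{2\pi}\Bigr)e^{-i(t+\pi/4)}\bigl(1+O(t^{-1})\bigr),
$$
while (\ref{delt}) yields $\bigl(\frac{\Delta'}{\Delta}(s)\bigr)^{m}=\bigl(-\log\frac{t}{2\pi}\bigr)^{m}+O\bigl(t^{-1}(\log t)^{m-1}\bigr)$. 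Using $|f(a+it)|\leqslant\sum_{n}|\alpha_{n}|n^{-a}\ll(a-1)^{-\gamma}$ and $\int_{1}^{T}(t/2\pi)^{a-\frac12}(\log t)^{m}t^{-1}\,dt\ll T^{a-\frac12}(\log T)^{m}$, the contribution of the two $O(\cdot)$-terms is $\ll(a-1)^{-\gamma}T^{a-\frac12}(\log T)^{m}$, i.e.\ within the asserted error. This reduces the lemma to evaluating, for each $n\geqslant1$,
$$
I_{n}=\frac{\alpha_{n}e^{-i\pi/4}}{2\pi\,n^{a}}\int_{c}^{T}H(t)\,e^{\,i\psi_{n}(t)}\,dt,\qquad H(t)=\Bigl(\frac{t}{2\pi}\Bigr)^{a-\frac12}\Bigl(-\log\frac{t}{2\pi}\Bigr)^{m},\quad\psi_{n}(t)=t\log\frac{t}{2\pi n}-t,
$$
and then summing over $n$.

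Each $I_{n}$ is treated by the classical saddle-point estimate for exponential integrals with one stationary point (see e.g.\ Titchmarsh's monograph, or the proof of Gonek's lemma): $\psi_{n}'(t)=\log(t/2\pi n)$ vanishes only at $t_{0}=2\pi n$, and $\psi_{n}''(t)=1/t>0$. When $2\pi n$ lies well inside $[c,T]$, the integral equals $H(t_{0})\sqrt{2\pi/\psi_{n}''(t_{0})}\,e^{i\psi_{n}(t_{0})+i\pi/4}$ up to a smaller error; since $H(2\pi n)=(-1)^{m}n^{a-\frac12}(\log n)^{m}$, $\sqrt{2\pi/\psi_{n}''(2\pi n)}=2\pi\sqrt n$ and $e^{i\psi_{n}(2\pi n)+i\pi/4}=e^{-2\pi in+i\pi/4}=e^{i\pi/4}$, the prefactor $\alpha_{n}e^{-i\pi/4}/(2\pi n^{a})$ collapses these exactly to $(-1)^{m}\alpha_{n}(\log n)^{m}$. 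Summing for $n\leqslant T/2\pi$ gives the stated main term (the finitely many $n<c/2\pi$ that carry no stationary point, $c$ being fixed, account for only $O(1)$ and are absorbed). For $n$ with $2\pi n>T$ there is no stationary point in $[c,T]$, and one integration by parts (first-derivative test) bounds $I_{n}$ by terms of the shape $|\alpha_{n}|n^{-a}|H(T)|\,|\psi_{n}'(T)|^{-1}$, with the analogous treatment near the lower endpoint.

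Summing the error terms over $n$ is the step I expect to be the main obstacle. For $n$ with $2\pi n\geqslant T/\pi$ one has $|\psi_{n}'(T)|\gg1$, so those boundary contributions sum to $\ll(T/2\pi)^{a-\frac12}(\log T)^{m}\sum_{n}|\alpha_{n}|n^{-a}\ll T^{a-\frac12}(\log T)^{m}(a-1)^{-\gamma}$, matching the first error term uniformly for $a\in(1,2]$; the secondary term of the stationary-phase expansion in the bulk is smaller by a factor $\asymp t_{0}^{-1}$, so the resulting errors sum to $\ll(\log T)^{m}\sum_{n\leqslant T/2\pi}|\alpha_{n}|n^{-1}\ll(\log T)^{m+\gamma}$ (take $\sigma=1+1/\log T$ in the hypothesis). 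The delicate range is $|2\pi n-T|\ll\sqrt T$: there $|\psi_{n}'(T)|$ is small and the second-derivative test degenerates. As in Gonek's treatment one handles it separately --- the first-derivative test over $T/2\pi<n<T/\pi$ gives individual bounds $\ll|\alpha_{n}|T^{1/2}(\log T)^{m}k^{-1}$ with $k\asymp(2\pi n-T)/2\pi$, whose sum over $k$ is $\ll T^{1/2+\varepsilon}$ (a harmonic sum, using $\alpha_{n}\ll n^{\varepsilon}$), while for the $O(\sqrt T)$ integers with $|2\pi n-T|\ll\sqrt T$ the saddle-point lemma itself gives $|I_{n}|\ll|\alpha_{n}|(\log n)^{m}\ll T^{\varepsilon}$ per term, so (whether or not $\alpha_{n}(\log n)^{m}$ is retained in the truncated main sum, the discrepancy being of this size) these together contribute $O(T^{1/2+\varepsilon})$. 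Collecting the main term with these two classes of errors yields the lemma.
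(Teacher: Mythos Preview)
Your sketch is correct and is precisely Gonek's stationary-phase argument for $\int f(s)\Delta(1-s)\,ds$ carried out with the extra factor $(\Delta'/\Delta)^{m}$ handled via (\ref{delt}), and with the dependence on $a$ tracked explicitly through the hypothesis $\sum_{n}|\alpha_{n}|n^{-\sigma}\ll(\sigma-1)^{-\gamma}$ wherever the series is summed trivially. The paper itself gives no proof: it simply refers to Lemma~5 of \cite{gonekk2} and remarks that one has to check uniformity in $a\in(1,2]$, so your write-up in fact supplies what the paper omits. The saddle-point bookkeeping (phase $\psi_{n}(t)=t\log\frac{t}{2\pi n}-t$, stationary point $t_{0}=2\pi n$, collapse of the leading term to $(-1)^{m}\alpha_{n}(\log n)^{m}$) and the three-range error analysis (far, near, and transition $|2\pi n-T|\ll\sqrt{T}$) are all exactly as in Gonek's proof, and your use of $\alpha_{n}\ll n^{\varepsilon}$ to absorb the harmonic sum and the $O(\sqrt{T})$ boundary terms into $T^{1/2+\varepsilon}$ is the standard device there.
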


\noindent For the proof we refer to Lemma 5 from \cite{gonekk2} (in the original paper the remainder term is not uniform in $a>1$).
\par

Finally, we consider the divisor function $r\mapsto d(r):=\sum_{d\mid n}1$ and some of its generalizations, respectively. For any positive real $\kappa$ and $\Re s > 1$ define the numbers $d_\kappa(n)$ by 
\[
\zeta(s)^{\kappa} = \sum_{n=1}^{\infty}d_\kappa(n)n^{-s}.
\]
Notice that $n\mapsto d_\kappa(n)$ is a multiplicative function, on a prime power $p^j$ given by
\[
d_\kappa(p^{\,j}) =\frac{\Gamma(\kappa+j) }{\Gamma(\kappa)j!}
\]
(see \cite[formula (5)]{HBrown}). We continue with two lemmas on this generalized divisor function:

\begin{Lemma}
Let $\kappa$ be a positive real number and $n$ a positive integer.
\begin{enumerate}
\item For $\kappa\geq 0$ and $n\geq 1$ we have $d_\kappa(n)\geq 0$.
\item For fixed $n$, $d_\kappa(n)$ increases with respect to $\kappa$.
\item For fixed $\kappa\geq 0$ and $\epsilon>0$ we have $d_\kappa(n)\ll n^{\epsilon}$.
\item If $j$ is an integer then
\[
d_{\kappa j}(n)=\sum_{n=n_1n_2\ldots n_j}d_\kappa(n_1)d_\kappa(n_2)\ldots d_\kappa(n_j).
\]
\end{enumerate}
\end{Lemma}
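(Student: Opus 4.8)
The plan is to reduce all four assertions to the multiplicativity of $n\mapsto d_\kappa(n)$ together with the closed form for its prime-power values recorded above, which I would rewrite as
\[
d_\kappa(p^{\,a})=\frac{\Gamma(\kappa+a)}{\Gamma(\kappa)\,a!}=\binom{\kappa+a-1}{a}=\prod_{i=0}^{a-1}\frac{\kappa+i}{i+1}.
\]
Writing $n=\prod_p p^{\,a_p}$ we have the finite product $d_\kappa(n)=\prod_p d_\kappa(p^{\,a_p})$. In the rightmost form each factor $d_\kappa(p^{\,a})$ is a product of the functions $\kappa\mapsto(\kappa+i)/(i+1)$, $0\le i\le a-1$, and each of these is nonnegative and non-decreasing on $[0,\infty)$ (the case $a=0$ giving the empty product $1$); hence $\kappa\mapsto d_\kappa(p^{\,a})$ is nonnegative and non-decreasing, and therefore so is the finite product $\kappa\mapsto d_\kappa(n)$. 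This proves (1) and (2) simultaneously, the value $\kappa=0$ being trivial since then $d_0(1)=1$ and $d_0(n)=0$ for $n>1$.

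For (3) I would use the monotonicity just established: with $m=\lceil\kappa\rceil$ one has $d_\kappa(n)\le d_m(n)$ for every $n$, so it suffices to bound the ordinary $m$-fold divisor function. Here the standard argument applies verbatim. For fixed $\epsilon>0$ the function $n\mapsto d_m(n)/n^{\epsilon}$ is multiplicative, and on $p^{\,a}$ it equals $\binom{m+a-1}{a}p^{-a\epsilon}\ll_m (a+1)^{m}p^{-a\epsilon}$; this is $\le 1$ as soon as $p$ exceeds a threshold depending only on $m$ and $\epsilon$, while for each of the finitely many smaller primes it stays bounded uniformly in $a$ (it even tends to $0$ as $a\to\infty$). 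Multiplying over all primes gives $d_m(n)\ll_{m,\epsilon}n^{\epsilon}$, hence $d_\kappa(n)\ll_{\kappa,\epsilon}n^{\epsilon}$.

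Finally, (4) is a formal identity between Dirichlet series which I would justify by absolute convergence. By (1), $\sum_n d_\kappa(n)n^{-\sigma}=\zeta(\sigma)^{\kappa}<\infty$ for $\sigma>1$, so $\zeta(s)^{\kappa}=\sum_n d_\kappa(n)n^{-s}$ converges absolutely in $\Re s>1$. For an integer $j\ge 1$ one may then expand $\zeta(s)^{\kappa j}=\bigl(\sum_n d_\kappa(n)n^{-s}\bigr)^{j}$ and regroup the absolutely convergent $j$-fold product according to the value of the product of the indices, obtaining $\sum_n\bigl(\sum_{n=n_1\cdots n_j}d_\kappa(n_1)\cdots d_\kappa(n_j)\bigr)n^{-s}$; comparing this with $\zeta(s)^{\kappa j}=\sum_n d_{\kappa j}(n)n^{-s}$ and invoking uniqueness of Dirichlet coefficients in a half-plane of absolute convergence gives the claimed formula. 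None of the four steps is a genuine obstacle — this is a package of standard facts about generalized divisor functions — and the only point that calls for a little care is the classical bound in (3), which the multiplicative argument above supplies.
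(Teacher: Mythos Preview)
Your proof is correct and self-contained. The paper itself does not prove this lemma at all; it simply refers the reader to Heath-Brown \cite{HBrown} for the proof. Your argument supplies exactly the standard multiplicative reasoning that underlies those facts: the closed form $d_\kappa(p^a)=\prod_{i=0}^{a-1}(\kappa+i)/(i+1)$ immediately gives (1) and (2), the reduction to integer $\kappa$ via (2) followed by the classical prime-by-prime bound gives (3), and (4) is the usual Dirichlet-series convolution identity. There is nothing to correct; you have written out what the paper leaves to the citation.
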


\noindent For a proof we refer to \cite{HBrown}.

\begin{Lemma}\label{gendivprob}
For any fixed positive real $\lambda, \mu$ we have, as $x\to\infty$,
\[
\sum\limits_{n\le x}d_{\lambda}(n)d_{\mu}(n)n^{-1}\,\asymp\,_{\lambda,\,\mu}(\log x)^{\lambda\mu},
\]
and, if $\lambda\geq 2$ is a positive integer, 
$$
\sum_{n\leq x}d_\lambda(n)=xP_{\lambda-1}(\log x)+O(x^{\delta_\lambda}), 
$$
where $P_{\lambda-1}$ is a computable polynomial of degree $\lambda-1$ and $\delta_\lambda$ is a positive quantity strictly less than one.
\end{Lemma}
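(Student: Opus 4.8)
The two assertions are of a different nature and I would establish them separately.

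\emph{First assertion.} The plan is to work with the Dirichlet series $D(s):=\sum_{n\ge 1}d_\lambda(n)d_\mu(n)n^{-s}$, which converges absolutely for $\Re s>1$ by part (3) of the preceding lemma. Since $d_\lambda$ is multiplicative with $d_\lambda(p)=\lambda$, comparing Euler factors shows that the local factor of $D$ at a prime $p$ and the local factor $(1-p^{-s})^{-\lambda\mu}$ of $\zeta(s)^{\lambda\mu}$ both begin $1+\lambda\mu\,p^{-s}+O_{\lambda,\mu}(p^{-2s})$, so their quotient is $1+O_{\lambda,\mu}(p^{-2s})$ and
$$
D(s)=\zeta(s)^{\lambda\mu}G(s),\qquad G(s):=\prod_p(1-p^{-s})^{\lambda\mu}\sum_{j\ge 0}\frac{d_\lambda(p^j)d_\mu(p^j)}{p^{js}}.
$$
The product defining $G$ converges absolutely for $\Re s>\tfrac12$, whence $G$ is holomorphic and bounded there, and $G(1)>0$ since every factor is positive. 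As the coefficients of $D$ are non-negative and, along the real axis, $D(1+\sigma)=\zeta(1+\sigma)^{\lambda\mu}G(1+\sigma)\sim G(1)\,\sigma^{-\lambda\mu}$ as $\sigma\to 0+$, the Hardy--Littlewood--Karamata Tauberian theorem gives
$$
\sum_{n\le x}\frac{d_\lambda(n)d_\mu(n)}{n}\sim\frac{G(1)}{\Gamma(\lambda\mu+1)}(\log x)^{\lambda\mu},
$$
which is more than the asserted $\asymp_{\lambda,\mu}$-estimate. If one prefers to avoid Tauberian machinery, the upper bound $\ll_{\lambda,\mu}(\log x)^{\lambda\mu}$ follows at once from $\sum_{n\le x}g(n)/n\le\prod_{p\le x}\sum_{j\ge 0}g(p^j)p^{-j}$, valid for any non-negative multiplicative $g$, combined with Mertens' theorem, while the matching lower bound is a standard consequence of the Selberg--Delange method (or of the theory of mean values of non-negative multiplicative functions).

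\emph{Second assertion.} This is the classical Dirichlet--Piltz divisor problem for $\zeta(s)^\lambda$. I would apply Perron's formula with $c=1+1/\log x$,
$$
\sum_{n\le x}d_\lambda(n)=\frac{1}{2\pi i}\int_{c-iU}^{c+iU}\zeta(s)^\lambda\frac{x^s}{s}\,ds+(\text{truncation error}),
$$
and then move the line of integration to $\Re s=\sigma_0$ with $\tfrac12<\sigma_0<1$. The only pole crossed is at $s=1$, of order $\lambda$; inserting $\zeta(s)=(s-1)^{-1}+\gamma+\cdots$ and expanding the Laurent series of $\zeta(s)^\lambda x^s/s$ at $s=1$, its residue equals $x$ times an explicitly computable polynomial of degree $\lambda-1$ in $\log x$, which we take as $P_{\lambda-1}$. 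The horizontal segments, the vertical integral on $\Re s=\sigma_0$ and the truncation error are then bounded by means of the convexity estimate $\zeta(\sigma+it)\ll|t|^{(1-\sigma)/2+\varepsilon}$ for $0\le\sigma\le 1$, together with classical power-moment bounds for $\zeta$ on vertical lines; balancing $U$ and $\sigma_0$ yields a remainder $O(x^{\delta_\lambda})$ with some $\delta_\lambda<1$ (for $\lambda=2$ the Dirichlet hyperbola method already gives $\delta_2=\tfrac12$, and in general one may take, say, $\delta_\lambda=\tfrac{\lambda-1}{\lambda}+\varepsilon$). Since this is a well-known result, one could alternatively just invoke it.

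\emph{Main obstacle.} The Euler-factor computation and the contour estimates are routine bookkeeping. The two points that genuinely need care are: for the first assertion, verifying that $G$ continues holomorphically across $\Re s=1$ and, crucially, that $G(1)\ne 0$, so that the Tauberian conclusion carries a positive leading constant (otherwise one obtains only an upper bound); and for the second assertion, supplying a non-trivial bound for $\zeta$ on vertical lines with $\Re s<1$ — without such analytic input the contour shift produces only the trivial exponent $\delta_\lambda=1$ and hence no error saving at all.
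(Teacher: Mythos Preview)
Your proposal is correct. The paper itself does not give a proof at all; it simply states that ``these assertions can be established by the standard technique based on Perron's formula and contour integration'' and cites Chapter~13 of Ivi\'c's book. Your treatment of the second assertion is exactly that standard technique, so there is nothing to compare there.

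For the first assertion you take a slightly different route from the one the paper alludes to: rather than applying Perron's formula and a contour argument (which, for non-integral $\lambda\mu$, would in practice mean the Selberg--Delange method with a Hankel contour around the branch point of $\zeta(s)^{\lambda\mu}$), you factor $D(s)=\zeta(s)^{\lambda\mu}G(s)$ and invoke the Hardy--Littlewood--Karamata Tauberian theorem for non-negative coefficients. This is a perfectly valid and in fact cleaner way to obtain the stated $\asymp$-result, since it avoids the branch-point complications entirely and even yields an asymptotic formula. The only genuine checkpoints, which you correctly flag, are the analytic continuation of $G$ past $\Re s=1$ and the non-vanishing $G(1)>0$; both are immediate from the Euler product.
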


\noindent These assertions can be established by the standard technique based on Perron's formula and contour integration (see \cite{ivic}, Chapter 13).

\section{The key proposition}

In order to prove Theorem \ref{Th1} and Corollary \ref{Cor2} we consider the discrete moments
\begin{equation}\label{s1}
S_1(T)=\sum_{0<t_{n}(\phi)\leqslant T} \zeta\left({\textstyle \frac12 }-it_{n}(\phi)\right)X\left({\textstyle \frac12}+it_{n}(\phi)\right)Y\left({\textstyle \frac12}-it_{n}(\phi)\right)
\end{equation}
and
\begin{equation}\label{s2}
S_2(T)=\sum_{0<t_{n}(\phi)\leqslant T} \left |X\left({\textstyle \frac12 }+it_{n}(\phi)\right)\right|^{2}.
\end{equation}
with Dirichlet polynomials $X(s)$ and $Y(s)$ which will be specified later. Our first aim is the following

\begin{Proposition}\label{Prop1}
Let $X(s)$ and $Y(s)$ be Dirichlet polynomials as defined in (\ref{DirPol}). Then
for any $\phi\in[0,\pi)$, as $T\rightarrow\infty$,
\begin{eqnarray}\label{prop1}
\lefteqn{}
S_1(T)&=&\frac{T}{2\pi}\biggl(\log\frac{T}{2\pi e}\biggr)\biggl(e^{-2i\phi}\sum\limits_{\substack{m\leqslant X\\ mn\leqslant Y}}\frac{x_{m}y_{mn}}{mn}\,+\,\sum\limits_{\substack{m\leqslant Y\\ mn\leqslant X}}\frac{y_{m}x_{mn}}{mn}\biggr)\,+\,O(R_{1}),
\end{eqnarray}
where
$$
R_{1}\,=\,(X+Y)\sqrt{T}(\log T)^{2}\mathcal{X}_{1}\mathcal{Y}_{1} +(X+Y)(\log T)^{4}\mathcal{X}_{0}\mathcal{Y}_{0};
$$
moreover,
\begin{equation}\label{prop2}
S_2(T)=\frac{T}{2\pi}\biggl(\log\frac{T}{2\pi
e}\biggr)\sum\limits_{n\leqslant X}\frac{|x_{n}|^{2}}{n}\,+\,O(R_{2}),
\end{equation}
where
$$
R_{2}\,=\,X\sqrt{T}(\log T)^{2}\sum\limits_{n\leqslant X}\frac{|x_{n}|^{2}}{n}+X(\log T)^3\mathcal{X}_{0}^2.
$$
All implicit constants are absolute.
\end{Proposition}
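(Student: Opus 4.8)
The plan is to realise $S_1(T)$ and $S_2(T)$ as contour integrals via the calculus of residues and then to evaluate them using the functional equation together with Lemma~\ref{NgLemma}.

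Fix $a=1+1/\log T$, a sufficiently large absolute constant $c_0$ (so that, by (\ref{delt}), $\Delta'(\tfrac12+it)\neq0$ for $t\ge c_0$, and $c_0>2\pi$), and a height $T_0\in[T,T+1]$ with $|T_0-t_n(\phi)|\gg1/\log T$ for all $n$; such a $T_0$ exists because, by (\ref{delt}), consecutive points $t_n(\phi)$ are spaced $\asymp1/\log T$ apart. Let $\mathcal{R}$ be the positively oriented rectangle with vertices $a+ic_0$, $a+iT_0$, $(1-a)+iT_0$, $(1-a)+ic_0$. Since $\Delta'(\tfrac12+it_n(\phi))\neq0$, the function $\Delta'(s)/(\Delta(s)-e^{2i\phi})$ has a simple pole with residue $1$ at $s=\tfrac12+it_n(\phi)$, and since (see \cite{Kalp}) these are the only zeros of $\Delta(s)-e^{2i\phi}$ in $\mathcal{R}$, the residue theorem gives
$$
S_1(T)=\frac{1}{2\pi i}\oint_{\partial\mathcal{R}}\zeta(1-s)\,X(s)\,Y(1-s)\,\frac{\Delta'(s)}{\Delta(s)-e^{2i\phi}}\,ds,\qquad
S_2(T)=\frac{1}{2\pi i}\oint_{\partial\mathcal{R}}X(s)\,X_1(1-s)\,\frac{\Delta'(s)}{\Delta(s)-e^{2i\phi}}\,ds,
$$
up to the contribution of the $O(1)$ roots lying below $c_0$, which is easily absorbed into the error. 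Indeed, the residues at $s=\tfrac12+it_n(\phi)$ are precisely the summands of (\ref{s1}) and (\ref{s2}), since $\zeta(1-(\tfrac12+it_n(\phi)))=\zeta(\tfrac12-it_n(\phi))$, $Y(1-(\tfrac12+it_n(\phi)))=Y(\tfrac12-it_n(\phi))$ and $\overline{X(\tfrac12+it_n(\phi))}=X_1(\tfrac12-it_n(\phi))$.

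On the right edge $\Re s=a>1$ we have $|\Delta(s)|\asymp|t|^{1/2-a}\ll|t|^{-1/2}<1$. Writing $\zeta(1-s)=\Delta(1-s)\zeta(s)$ and $\Delta(1-s)\,\Delta'(s)/(\Delta(s)-e^{2i\phi})=(\Delta'/\Delta)(s)/(\Delta(s)-e^{2i\phi})$, and inserting (\ref{star}), the integrand becomes
$$
-e^{-2i\phi}\zeta(s)X(s)Y(1-s)\frac{\Delta'}{\Delta}(s)\;-\;e^{-2i\phi}\sum_{k\ge1}e^{-2ki\phi}\,\zeta(s)X(s)Y(1-s)\,\Delta(s)^{k}\,\frac{\Delta'}{\Delta}(s).
$$
Applying Lemma~\ref{NgLemma} with $f=\zeta$ (so $\gamma=1$) to the first term, and using $a-1=1/\log T$, gives $e^{-2i\phi}\frac{T}{2\pi}\bigl(\log\frac{T}{2\pi e}\bigr)\sum_{m\le X,\,mn\le Y}\frac{x_my_{mn}}{mn}+O(Y(\log T)^4\mathcal{X}_0\mathcal{Y}_0)$, the first main term of (\ref{prop1}); in the remaining terms $|\Delta(s)|^{k}\ll|t|^{-k/2}$, so with the trivial bounds $|X(s)|\le\mathcal{X}_1$ and $|Y(1-s)|\ll Y\mathcal{Y}_1$ their contribution is $\ll Y\sqrt T(\log T)^2\mathcal{X}_1\mathcal{Y}_1$ (the $k=1$ term dominating). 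On the left edge $\Re s=1-a$ we have $|\Delta(s)|>1$, $|\Delta(1-s)|\ll|t|^{-1/2}$, and $\zeta(1-s)=\sum_{r\ge1}r^{s-1}$ converges absolutely since $\Re(1-s)=a>1$; now $\Delta'(s)/(\Delta(s)-e^{2i\phi})=(\Delta'/\Delta)(s)\bigl(1+\sum_{k\ge1}e^{2ki\phi}\Delta(1-s)^{k}\bigr)$. For the $k=0$ part I substitute $s\mapsto1-s$, use $(\Delta'/\Delta)(1-s)=(\Delta'/\Delta)(s)$, and apply the analogue of Lemma~\ref{NgLemma} for the lower half-plane (proved identically) with $f=\zeta$ and $X$, $Y$ interchanged; this yields $\frac{T}{2\pi}\bigl(\log\frac{T}{2\pi e}\bigr)\sum_{m\le Y,\,mn\le X}\frac{y_mx_{mn}}{mn}+O(X(\log T)^4\mathcal{X}_0\mathcal{Y}_0)$, the second main term, while the $k\ge1$ parts (with $|\Delta(1-s)|^{k}\ll|t|^{-k/2}$) contribute $\ll X\sqrt T(\log T)^2\mathcal{X}_1\mathcal{Y}_1$. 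For $S_2$ the same scheme is simpler: the right edge carries no $\zeta(1-s)$, hence no $\Delta(1-s)$ to cancel, and contributes only to the error ($\ll X\sqrt T(\log T)^2\sum_{n\le X}|x_n|^2/n$, by Cauchy--Schwarz), whereas the $k=0$ part of the left edge after $s\mapsto1-s$ gives the diagonal $n=m$ of $X(1-s)X_1(s)$, i.e. the main term $\frac{T}{2\pi}\bigl(\log\frac{T}{2\pi e}\bigr)\sum_{n\le X}|x_n|^2/n$ of (\ref{prop2}), with the off-diagonal and $k\ge1$ terms absorbed into $R_2$.

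It remains to bound the two horizontal edges. The decisive point is that for $\Re s=\sigma>\tfrac12$ one has $|\Delta(s)|\asymp|t|^{1/2-\sigma}\ll1$, so — using the choice of $T_0$ (resp. of $c_0>2\pi$ for the bottom edge), which guarantees $|\Delta(s)-e^{2i\phi}|\gg1$ all along the edge — $|\Delta'(s)/(\Delta(s)-e^{2i\phi})|\ll(\log|t|)\,|t|^{1/2-\sigma}$, and this decay exactly cancels the growth $|\zeta(1-s)|\asymp|t|^{\sigma-1/2}$ (via $\zeta(1-s)=\Delta(1-s)\zeta(s)$) that occurs for $\Re(1-s)<\tfrac12$. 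Together with the convexity bound $\zeta(\tfrac12+it)\ll|t|^{1/4+\varepsilon}$ near $\sigma=\tfrac12$, with $|\Delta'(s)/(\Delta(s)-e^{2i\phi})|\ll\log|t|$ for $\sigma\le\tfrac12$, and with the trivial estimates $|X(\sigma+iT_0)|\le X^{(1-\sigma)^{+}}\mathcal{X}_1$, $|Y(1-\sigma-iT_0)|\le Y^{\sigma^{+}}\mathcal{Y}_1$ and $X^{1-\sigma}Y^{\sigma}\le X+Y$, a short computation shows that the integrand is $\ll\sqrt T(\log T)^2(X+Y)\mathcal{X}_1\mathcal{Y}_1$ uniformly on both horizontal edges, so they lie within $R_1$ (and the $\zeta$-free version for $S_2$ lies within $R_2$). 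Assembling everything gives (\ref{prop1}) and (\ref{prop2}). The step I expect to be the main obstacle is precisely this horizontal-edge analysis — making the cancellation of the growth of $\zeta(1-s)$ against the decay of $\Delta'/(\Delta-e^{2i\phi})$ rigorous — together with isolating the $k=1$ terms of the $\Delta$-expansions, which is the real origin of the $\sqrt T\,\mathcal{X}_1\mathcal{Y}_1$ in the error; the off-diagonal contributions on the vertical edges are then controlled exactly as in the proof of Lemma~\ref{NgLemma}, by integration by parts and splitting of the summation ranges.
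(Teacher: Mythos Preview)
Your proposal is correct and follows essentially the same route as the paper: the residue/contour set-up, the geometric expansion of $(\Delta(s)-e^{2i\phi})^{-1}$ on the vertical edges, the appeal to Lemma~\ref{NgLemma} for the $k=0$ term, and the trivial bound for the $k\ge 1$ tail are all identical in spirit. Two cosmetic differences are worth noting: on the left edge the paper takes complex conjugates (reducing $\mathcal I_3$ to an upper-half-plane integral with $X$ and $Y$ swapped for $Y_1$ and $X_1$) rather than substituting $s\mapsto 1-s$ into the lower half-plane, which avoids invoking a separate ``lower half-plane analogue'' of Lemma~\ref{NgLemma}; and on the horizontal edges the paper proceeds more directly, proving $|\Delta(s)-e^{2i\phi}|>\tfrac13$ by a short case analysis in $\sigma$ (the delicate strip $|\sigma-\tfrac12|\ll 1/\log T$ handled via $\vartheta(T)$) and then simply bounding $|\zeta(1-s)|\ll\sqrt T\log T$ and $|\Delta'(s)/(\Delta(s)-e^{2i\phi})|\ll\log T$ uniformly, rather than arranging the cancellation you describe --- your version works too, but the lower bound on $|\Delta(s)-e^{2i\phi}|$ near $\sigma=\tfrac12$ still has to be justified and is exactly the step the paper isolates.
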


\subsection{Proof of Proposition \ref{Prop1}}

\subsubsection{Proof of (\ref{prop1})}\label{square}

We begin with the estimations
\begin{align*}
& \bigl|\zeta\bigl(\tfrac{1}{2}+it\bigr)\bigr|\,\ll\,T^{1/6}(\log T)^{3/2},\\
& \bigl|X\bigl(\tfrac{1}{2}+it\bigr)\bigr|\,\le\,\sum\limits_{n\leqslant
X}\frac{|x_{n}|}{\sqrt{n}}\,=\,\sum\limits_{n\leqslant
X}\sqrt{n}\,\frac{|x_{n}|}{n}\,\le\,\sqrt{X}\mathcal{X}_{1},\quad
\bigl|Y\bigl(\tfrac{1}{2}+it\bigr)\bigr|\,\le\,\sqrt{Y}\mathcal{Y}_{1};
\end{align*}
the first one is a well-known bound from zeta-function theory (see \cite{ivic}), whereas the second and third one are straightforward.
Hence, it is sufficient to obtain (\ref{prop1}) for the sum over the segment $c<t_{n}(\phi)\leqslant T$, where $c>32\pi$ is a large
absolute constant ($32\pi$ comes from the inequality $2(\frac{t}{2\pi})^{-\frac12}<\frac12$ that is used in the proof).

Next, without loss of generality we may set $T ={1\over 2}(t_{\nu}(\phi)+t_{\nu+1}(\phi))$. Indeed, otherwise we may replace $T$ by the closest value $T_{1}$ of such type. Then the error of such replacement in the right-hand side of (\ref{prop1})
is bounded by
\begin{eqnarray*}
\lefteqn{\biggl(\log\frac{T}{2\pi}\biggr)^{-1}\biggl(\log\frac{T}{2\pi}\biggr)\biggl(\sum\limits_{\substack{m\leqslant X
\\ mn\leqslant Y}}\frac{|x_{m}y_{mn}|}{mn}\,+\,\sum\limits_{\substack{m\leqslant Y\\ mn\leqslant X}}\frac{|y_{m}x_{mn}|}{mn}\biggr)}\\
&\ll &\mathcal{X}_{0}\mathcal{Y}_{0}\sum\limits_{mn\leqslant T}\frac{1}{mn}\,\ll\,\mathcal{X}_{0}\mathcal{Y}_{0}\sum\limits_{r\leqslant T}\frac{d(r)}{r}\,\ll\,\mathcal{X}_{0}\mathcal{Y}_{0}(\log T)^{2},
\end{eqnarray*}
where we have used Lemma \ref{gendivprob} and the asymptotics $T\log T$ for the number of $t_n(\phi)\leqslant T$ (see Theorem 1 from \cite{Kalp}). Since the points $s =\tfrac{1}{2}+it_{n}(\phi)$ are the roots of the function
$\Delta(s)-e^{2i\phi}$, the sum in question can be rewritten as a contour integral:
\begin{eqnarray*}
\lefteqn{\sum_{\substack{c< t_{n}(\phi)\leqslant T}}\zeta(\textstyle{\frac12}-it_{n}(\phi))X(\textstyle{\frac12}+it_{n}(\phi))Y(\textstyle{\frac12}-it_{n}(\phi))}\\
&=&\frac{1}{2\pi i }\int_{\square} \zeta(1-s)X(s)Y(1-s)\frac{\Delta'(s)}{\Delta(s)-e^{2i\phi}}\,ds;
\end{eqnarray*}
here $\square$ stands for the counterclockwise oriented rectangular contour with vertices $a+ic,\ a+iT,\ 1-a+iT,\ 1-a+ic$, where
$a=1+(\log T)^{-1}$. Let $\mathcal{I}_1$ and $\mathcal{I}_3$ be integrals over right and left sides of contour, and $\mathcal{I}_2$
and $\mathcal{I}_4$ be the integrals over the top and bottom edges of the contour. We may assume the constant $c$ so large that the
relations
\[
|\Delta(a+it)|\,=\,\biggl(\frac{t}{2\pi}\biggr)^{1/2-a}\bigl(1\,+\,O(t^{-1})\bigr)\,\le\,2\biggl(\frac{t}{2\pi}\biggr)^{-1/2}\,<\,\frac{1}{2}
\]
hold for any $t>c$.  In view of (\ref{star}) we have 
\begin{eqnarray*}
\mathcal{I}_1&=&\frac{1}{2\pi i}\int_{a+ic}^{a+iT}\zeta(1-s)X(s)Y(1-s)\,\frac{\Delta'(s)}{\Delta(s)-e^{2i\phi}}\,ds\\
&=&-\,\frac{e^{-2i\phi}}{2\pi i}\int_{a+ic}^{a+iT}\frac{\zeta(s)}{\Delta(s)}\,X(s)Y(1-s)\Delta'(s)\biggl(1\,+\,\sum\limits_{k=1}^{\infty}e^{-2ik\phi}\Delta(s)^{k}\biggr)ds\,\\
&=&\,-e^{-2i\phi}(j_{1}\,+\,j_{2}),
\end{eqnarray*}
where
\begin{align*}
& j_{1}\,=\,\frac{1}{2\pi
i}\int_{a+ic}^{a+iT}\zeta(s)X(s)Y(1-s)\frac{\Delta'}{\Delta}(s)ds,\\
& j_{2}\,=\,\frac{1}{2\pi
i}\int_{a+ic}^{a+iT}\zeta(s)X(s)Y(1-s)\frac{\Delta'}{\Delta}(s)\sum\limits_{k
= 1}^{\infty}e^{-2ik\phi}\Delta(s)^{k}\,ds.
\end{align*}
We observe for $s=a+it$
\begin{align*}
& |X(a+it)|\,\le\,\sum\limits_{n\leqslant
X}\frac{|x_{n}|}{n^{a}}\,\leqslant \,
\mathcal{X}_{1}, \quad |Y(1-a-it)|\,\le\,\sum\limits_{m\leqslant Y}\frac{m^{a}|y_{m}|}{m}\,\ll\, Y\mathcal{Y}_{1},\\
& \biggl|\sum\limits_{k=
1}^{\infty}e^{-2ik\phi}\Delta(a+it)^{k}\biggr|\,\le\,2\biggl(\frac{t}{2\pi}\biggr)^{-1/2}\sum\limits_{k
= 0}^{\infty}\frac{1}{2^{k}}\,\ll\,t^{-1/2}.
\end{align*}
Thus, we have
\[
|j_{2}|\,\ll\,\zeta(a)Y\mathcal{X}_{1}\mathcal{Y}_{1}\int_{c}^{T}\frac{\log
{t}\,dt}{\sqrt{t}}\,\ll\,Y\sqrt{T}(\log
T)^{2}\mathcal{X}_{1}\mathcal{Y}_{1}.
\]
Applying Lemma \ref{NgLemma}  to $j_{1}$ we get
\begin{multline*}
\mathcal{I}_{1}\,=\,e^{-2i\phi}\frac{T}{2\pi}\biggl(\log\frac{T}{2\pi
e}\biggr)\sum\limits_{\substack{m\leqslant X \\
mn\leqslant Y}}\frac{x_{m}y_{mn}}{mn}\,+ O\bigl(Y\sqrt{T}(\log
T)^{2}\mathcal{X}_{1}\mathcal{Y}_{1}+Y(\log
T)^{4}\mathcal{X}_{0}\mathcal{Y}_{0}\bigr).
\end{multline*}

In a similar way we may compute ${\mathcal I}_3$. We observe
\[
\mathcal{I}_{3}\,=\,-\frac{1}{2\pi}\int_{c}^{T}\zeta(a-it)X(1-a+it)Y(a-it)\,\frac{\Delta'(1-a+it)}{\Delta(1-a+it)-e^{2i\phi}}\,dt.
\]
This in combination with $\overline{X}(s) = X_{1}(\overline{s}), \overline{Y}(s) = Y_{1}(\overline{s})$ yields
\[
\overline{\mathcal{I}}_{3}\,=\,-\frac{1}{2\pi
i}\int_{a+ic}^{a+iT}\zeta(s)X_{1}(1-s)Y_{1}(s)\,\frac{\Delta'(1-s)}{\Delta(1-s)-e^{-2i\phi}}\,ds.
\]
In view of (\ref{star}) we find $\overline{\mathcal{I}}_{3} = -(j_{3}+j_{4})$, where the expressions for $j_{3}$ and $j_{4}$ can be obtained by the replacing $X(s)$ with $Y_{1}(s)$ and $Y(1-s)$ with $X_{1}(1-s)$ in the expressions for $j_{1}$ and $j_{2}$. Applying Lemma 4 to $j_{3}$ and estimating $j_{4}$ similarly to $j_{2}$, we get
\begin{multline*}
\overline{\mathcal{I}}_{3}\,=\,\frac{T}{2\pi}\biggl(\log\frac{T}{2\pi
e}\biggr)\sum\limits_{\substack{m\leqslant Y \\
mn\leqslant
X}}\frac{\overline{y}_{m}\overline{x}_{mn}}{mn}+O\bigl(X\sqrt{T}(\log
T)^{2}\mathcal{X}_{1}\mathcal{Y}_{1}+X(\log
T)^{4}\mathcal{X}_{0}\mathcal{Y}_{0}\bigr).
\end{multline*}
In order to estimate $\mathcal{I}_{2}$ we first note that the following inequalities hold along the line segment of integration:
\begin{align*}
& |\zeta(1-s)|\,\ll\, \sqrt{T}\log T,\quad
|X(s)|\,\le\,\sum\limits_{n\leqslant X}\frac{|x_{n}|}{n}\,n^{1-\sigma}\,\ll\,X^{1-\sigma}\mathcal{X}_{1},\\
& |Y(1-s)|\,\le\,\sum\limits_{n\leqslant
Y}\frac{|y_{n}|}{n}\,n^{\sigma}\,\ll\,Y^{\sigma}\mathcal{Y}_{1},
\end{align*}
and, finally,
\begin{eqnarray*}
|\zeta(1-s)X(s)Y(1-s)|&\ll&\sqrt{T}\mathcal{X}_{1}\mathcal{Y}_{1}\,X\biggl(\frac{Y}{X}\biggr)^{\sigma}\log T\\
&\ll& X\sqrt{T}\mathcal{X}_{1}\mathcal{Y}_{1}\biggl\{\biggl(\frac{Y}{X}\biggr)^{1-a}\,+\,\biggl(\frac{Y}{X}\biggr)^{a}\biggr\}\log T\\
&\ll& (X+Y)\sqrt{T}(\log T)\mathcal{X}_{1}\mathcal{Y}_{1}.
\end{eqnarray*}
Next, by (\ref{delt}) we get
\[
\frac{\Delta'(s)}{\Delta(s)-e^{2i\phi}}\,=\,\frac{\Delta'(s)}{\Delta(s)}\biggl(1\,+\,\frac{e^{2i\phi}}{\Delta(s)-e^{2i\phi}}\biggr)\,\ll\,(\log
T)\biggl(1\,+\,\frac{1}{|\Delta(s)-e^{2i\phi}|}\biggr).
\]
The second term in the brackets is bounded by an absolute constant. Indeed, in the case $\sigma\geqslant \tfrac{1}{2} +
\tfrac{1}{3}\bigl(\log\frac{T}{2\pi}\bigr)^{-1}$ by (\ref{L2}) we have
\[
|\Delta(\sigma +
iT)|\,=\,\biggl(\frac{T}{2\pi}\biggr)^{1/2-\sigma}\bigl(1\,+\,O(T^{-1})\bigr)\,\le\,e^{-1/3}\bigl(1\,+\,O(T^{-1})\bigr)<\frac{1}{2},
\]
and hence $|\Delta(s)-e^{2i\phi}|\geqslant 1 -
|\Delta(s)|>\tfrac{1}{2}$. Similarly, in the case $\sigma\leqslant
\tfrac{1}{2} - \tfrac{1}{3}\bigl(\log\frac{T}{2\pi}\bigr)^{-1}$ we
have
\[
|\Delta(\sigma +
iT)|\,\ge\,e^{1/3}\bigl(1\,+\,O(T^{-1})\bigr)\,>\,\frac{4}{3},\quad
|\Delta(s)-e^{2i\phi}|>\frac{4}{3}-1 = \frac{1}{3}.
\]
Finally, let
\[
\frac{1}{2}\,-\,\frac{1}{3}\biggl(\log\frac{T}{2\pi}\biggr)^{-1}\,<\,\sigma\,<\,\frac{1}{2}\,+\,\frac{1}{3}\biggl(\log\frac{T}{2\pi}\biggr)^{-1}.
\]
Then, using the relations
\[
\Delta\bigl(\tfrac{1}{2}+iT\bigr)\,=\,e^{-2i\vartheta(T)}, \quad
\Delta(\sigma+iT)\,=\,\tau\,e^{-2i\vartheta(T)}\bigl(1\,+\,O(T^{-1})\bigr),
\]
where $\tau = \bigl(T/(2\pi)\bigr)^{1/2-\sigma}$ and $\vartheta =
\vartheta(T)$  denotes the increment of any fixed continuous branch
of the argument of $\pi^{-s/2}\Gamma\bigl(s/2\bigr)$ along the line
segment with end-points $s = \tfrac{1}{2}$ and $s =
\tfrac{1}{2}+iT$ (check \cite{Edwards}), we have $e^{-1/3}\leqslant \tau\leqslant e^{1/3}$
and
\begin{eqnarray*}
\Delta(\sigma +iT)-e^{2i\phi}&=&\bigl(\Delta(\sigma+iT)-\Delta\bigl(\tfrac{1}{2}+iT\bigr)\bigr)\,+\,\bigl(\Delta\bigl(\tfrac{1}{2}+iT\bigr)-e^{2i\phi}\bigr)\\
&=&(\tau -1)e^{-2i\vartheta}\,-\,2ie^{i(\phi-\vartheta)}\sin{(\phi+\vartheta)}\,+\,O(T^{-1})\\
&=& e^{-i\vartheta}\bigl((\tau
-1)\cos\vartheta\,+2\sin{(\vartheta+\phi)}\sin{\phi}-\\
& &-
\,i((\tau-1)\sin\vartheta+2\sin{(\vartheta+\phi)\cos\phi)}\bigr)\,+\,O(T^{-1}).
\end{eqnarray*}
Thus we obtain
\begin{eqnarray*}
\bigl|\Delta(\sigma+iT)-e^{2i\phi}\bigr|^{2}&=&(\tau-1)^{2}\,+\,4\tau\sin^{2}{(\vartheta+\phi)}\,+\,O(T^{-1})\\
&\geqslant &\,4\tau\sin^{2}(\vartheta+\phi)\,+\,O(T^{-1}).
\end{eqnarray*}
Using the fact that $T ={1\over 2}(t_{\nu}(\phi)+t_{\nu+1}(\phi))$ for some $\nu$, we finally get
\[
\sin^{2}(\vartheta+\phi)\,=\,\sin^{2}\biggl(\pi\nu+\frac{\pi}{2}+O(T^{-1})\biggr)\,\ge\,\sin^{2}\frac{\pi}{3}\,=\,\frac{3}{4}
\]
and hence, for sufficiently large $T$,
\[
|\Delta(\sigma+iT)-e^{2i\phi}|^{2}\,\ge\,4\cdot\frac{3}{4}\,e^{-1/3}\,+\,O(T^{-1})\,>\,2.
\]
Thus, $|\Delta(s)-e^{2i\phi}|>\tfrac{1}{3}$ for any $s$ under consideration, hence
\[
\mathcal{I}_{2}\,\ll\,(X+Y)\sqrt{T}(\log T)^{2}\mathcal{X}_{1}\mathcal{Y}_{1}.
\]
The integral $\mathcal{I}_{4}$ can be estimated in a similar way. The relation (\ref{prop1}) is proved.

\subsubsection{Proof of (\ref{prop2})}

In view of the inequalities
\[
\bigl|X\bigl(\tfrac{1}{2}+it_{n}(\phi)\bigr)\bigr|^2\,\le\,\biggl(\sum\limits_{n\leqslant
X}\frac{|x_{n}|}{\sqrt{n}}\biggr)^{2}\,\le\,X\sum\limits_{n\leqslant X}\frac{|x_{n}|^{2}}{n}
\]
it suffices to consider only the sum over the segment $c<t_{n}(\phi)\leqslant T$. Next, we may set $T ={1\over 2}(t_{\nu}(\phi)+t_{\nu +1}(\phi))$. Then we have
\begin{eqnarray*}
\sum\limits_{c<t_{n}(\phi)\leqslant T}\bigl|X\bigl(\tfrac{1}{2}+it_{n}(\phi)\bigr)\bigr|^2&=&\sum\limits_{c<t_{n}(\phi)\leqslant T}X\bigl(\tfrac{1}{2}+it_{n}(\phi)\bigr)X_{1}\bigl(\tfrac{1}{2}-it_{n}(\phi)\bigr)\\
&=&{1\over 2\pi i}\int_{\square}X(s)X_{1}(1-s)\frac{\Delta'(s)}{\Delta(s)-e^{2i\phi}}\,ds,
\end{eqnarray*}
where $\square$ stands for the rectangular contour defined in Section \ref{square}.
Denoting the integrals $\mathcal{I}_{k}$, $1\leqslant k\leqslant 4$ as above, we get
\begin{eqnarray*}
\mathcal{I}_{1}&=&\frac{1}{2\pi i}\int_{a+ic}^{a+iT}X(s)X_{1}(1-s)\frac{\Delta'(s)}{\Delta(s)-e^{2i\phi}}ds\\
&=&-\frac{1}{2\pi i}\int_{a+ic}^{a+iT}X(s)X_{1}(1-s)\frac{\Delta'}{\Delta}(s)\,\sum\limits_{k
= 1}^{\infty}e^{-2ik\phi}\Delta(s)^{k}\,ds.
\end{eqnarray*}

Estimating the integrand as in Section \ref{square} we find
\[
\mathcal{I}_{1}\,\ll\,X\mathcal{X}_{1}^{2}\int_{c}^{T}\frac{\log
t\,dt}{\sqrt{t}}\,\ll\,X\sqrt{T}(\log
T)\mathcal{X}_{1}^{2}\,\ll\,X\sqrt{T}(\log
T)^{2}\sum\limits_{n\leqslant X}\frac{|x_{n}|^{2}}{n}.
\]
Furthermore,
\[
\mathcal{I}_{3}\,=\,-\frac{1}{2\pi}\int_{c}^{T}X(1-a+it)X_{1}(a-it)\frac{\Delta'(1-a+it)}{\Delta(1-a+it)-e^{2i\phi}}dt,
\]
and the relations $\overline{X}(s) = X_{1}(\overline{s})$, $\overline{X}_{1}(s) = X(\overline{s})$ imply
\begin{eqnarray*}
\overline{\mathcal{I}}_{3}&=&-\frac{1}{2\pi i}\int_{c}^{T}X_{1}(1-s)X(s)\frac{\Delta'(1-s)}{\Delta(1-s)-e^{-2i\phi}}ds\\
&=&-\frac{1}{2\pi i}\int_{a+ic}^{a+iT}X(s)X_{1}(1-s)\frac{\Delta'}{\Delta}(s)\,\frac{ds}{1-e^{-2i\phi}\Delta(s)}\,=\,-j_{1}+\mathcal{I}_{1},
\end{eqnarray*}
where
\[
j_{1}\,=\,\frac{1}{2\pi i}\int_{a+ic}^{a+iT}X(s)X_{1}(1-s)\frac{\Delta'}{\Delta}(s)\,ds.
\]
Lemma \ref{NgLemma} with $f(s)\equiv 1$ (that is, $\alpha_{1} =1$, $\alpha_{n} = 0$ for $n>1$), $\gamma = 0$ and $Y(s) = X_{1}(s)$, applied to $j_{1}$ yields
\[
j_{1}\,=\,-\frac{T}{2\pi}\biggl(\log\frac{T}{2\pi
e}\biggr)\sum\limits_{n\leqslant
X}\frac{|x_{n}|^{2}}{n}\,+\,O\bigl(X(\log
T)^{3}\mathcal{X}_{0}^{2}\bigr).
\]
Using the above bound for $\mathcal{I}_{1}$, we derive
\[
\mathcal{I}_{3}\,=\,\frac{T}{2\pi}\biggl(\log\frac{T}{2\pi
e}\biggr)\sum\limits_{n\leqslant
X}\frac{|x_{n}|^{2}}{n}\,+\,O\biggl(X\sqrt{T}(\log
T)^{2}\sum\limits_{n\leqslant
X}\frac{|x_{n}|^{2}}{n}\biggr)\,+\,O\bigl(X(\log
T)^{3}\mathcal{X}_{0}^{2}\bigr).
\]
Estimating $\mathcal{I}_{2}$ and taking into account the bounds
\[
X(s)\,\ll\,X^{1-\sigma}\mathcal{X}_{1},\quad
X_{1}(1-s)\,\ll\,X^{\sigma}\mathcal{X}_{1},\quad
\frac{\Delta'(s)}{\Delta(s)-e^{2i\phi}}\,\ll\,\log T
\]
for $s = \sigma + iT$, $1-a\le\sigma\leqslant a$, $T = {1\over 2}(t_{\nu}(\phi)+t_{\nu+1}(\phi))$, we get
\[
\mathcal{I}_{2}\,\ll\,X(\log T)\mathcal{X}_{1}^{2}\,\ll\,X(\log
T)^{2}\sum\limits_{n\leqslant X}\frac{|x_{n}|^{2}}{n}.
\]
The integral $\mathcal{I}_{4}$ can be estimated in a similar way. Thus, formula (\ref{prop2}) is proved.

\subsection{Proof of Theorem \ref{Th1}}

Suppose that $k = p/q$, where $p> q\geqslant 1$, $(p,q) = 1$. Let $\kappa = 1/q$, $r = p-q$, $\xi =T^{1/(4p)}$.  We define the coefficients for the polynomials in (\ref{DirPol}) by
\[
X(s)\,=\,\biggl(\sum\limits_{n\leqslant \xi}\frac{d_{\kappa}(n)}{n^{s}}\biggr)^{p}\,=\,\sum\limits_{n\leqslant
\xi^{p}}\frac{d_{\kappa p}(n;\xi)}{n^{s}},\;\;Y(s)\,=\,\biggl(\sum\limits_{n\leqslant \xi}\frac{d_{\kappa}(n)}{n^{s}}\biggr)^{r}\,=\,\sum\limits_{n\leqslant \xi^{r}}\frac{d_{\kappa r}(n;\xi)}{n^{s}},
\]
where $d_{\kappa m}(n;\xi)$ is given by 
\[
d_{\kappa m}(n;\xi)\,=\,\sum\limits_{\substack{n = n_{1}\cdots n_{m}
\\ n_{1},\ldots, n_{m}\leqslant \xi}}d_{\kappa}(n_{1})\ldots d_{\kappa}(n_{m}).
\]
By Lemma 7 it is easy to see that $d_{\kappa m}(n;\xi) = d_{\kappa m}(n)$ for $m\leqslant \xi$ and $0\leqslant d_{\kappa m}(n,\;\xi)\leqslant d_{\kappa m}(n)$ for $m>\xi$. Next, we consider the moment  (\ref{s1}). 
By (\ref{prop1}),
\[
S_{1}(T)\,=\,\frac{T}{2\pi}\biggl(\log\frac{T}{2\pi
e}\biggr)\bigl(e^{-2i\phi}\Sigma_{1}\,+\,\Sigma_{2}\bigr)\,+\,O(R_{1}),
\]
where
\begin{eqnarray*}
\Sigma_{1}=\sum\limits_{m\leqslant \xi^{p},\,mn\leqslant
\xi^{r}}\frac{d_{\kappa p}(m;\xi)d_{\kappa r}(mn;\xi)}{mn} &\le&
\sum\limits_{n\le\xi^{r}}\frac{d_{\kappa
r}(n)}{n}\sum\limits_{l|n}d_{\kappa p}(l)\\
&=& \sum\limits_{n\leqslant \xi^{r}}\frac{d_{\kappa r}(n)d_{\kappa
p+1}(n)}{n}\,\ll\,(\log T)^{\kappa r(\kappa p+1)},
\end{eqnarray*}
\begin{eqnarray*}
\Sigma_{2}=\sum\limits_{m\leqslant \xi^{r},\,mn\leqslant
\xi^{p}}\frac{d_{\kappa r}(m;\xi)d_{\kappa
p}(mn;\xi)}{mn}&\ge&\sum\limits_{n\leqslant \xi}\frac{d_{\kappa
p}(n)}{n}\sum\limits_{l|n}d_{\kappa r}(l)\\
&=&\sum\limits_{n\leqslant \xi}\frac{d_{\kappa p}(n)d_{\kappa r
+1}(n)}{n}\,\gg\,(\log T)^{\kappa p(\kappa r+1)},
\end{eqnarray*}
and
\begin{eqnarray*}
R_{1}&\ll&(\xi^{p}+\xi^{r})\sqrt{T}(\log
T)^{2}\sum\limits_{n\leqslant \xi^{p}}\frac{d_{\kappa
p}(n;\xi)}{n}\sum\limits_{m\leqslant \xi^{r}}\frac{d_{\kappa r}(m;\xi)}{m}\\
&\ll& T^{3/4}(\log T)^{2+\kappa(p+r)}\,\ll T^{4/5}.
\end{eqnarray*}
Thus,
\[
|S_{1}(T)|\,\ge\,\frac{T}{2\pi}\biggl(\log\frac{T}{2\pi
e}\biggr)\bigl(\Sigma_{2}-\Sigma_{1}\bigr)+O\bigl(T^{4/5}\bigr)\gg\,T(\log
T)^{1+\kappa p(\kappa r+1)}\,\gg\, T(\log T)^{k^{2}+1}.
\]
On the contrary, using H\"{o}lder's inequality we get 
\begin{eqnarray*}
|S_{1}(T)| &\le& \biggl(\sum\limits_{0<t_{n}(\phi)\leqslant T}\bigl|\zeta\bigl(\tfrac{1}{2}+it_{n}(\phi)\bigr)\bigr|^{2k}\biggr)^{1/(2k)}\times\\
&&\times\,\biggl(\sum\limits_{0<t_{n}(\phi)\leqslant T}\bigl|X\bigl(\tfrac{1}{2}+it_{n}(\phi)\bigr)\bigr|^{2k/(2k-1)}\cdot\bigl|Y\bigl(\tfrac{1}{2}+it_{n}(\phi)\bigr)\bigr|^{2k/(2k-1)}\biggr)^{1-1/(2k)}\\
&=& \biggl(\sum\limits_{0<t_{n}(\phi)\leqslant
T}\bigl|\zeta\bigl(\tfrac{1}{2}+it_{n}(\phi)\bigr)\bigr|^{2k}\biggr)^{1/(2k)}\bigl(S_{2}(T)\bigr)^{1-1/(2k)},
\end{eqnarray*}
where $S_2(T)$ is given by (\ref{s2}).  
In view of (\ref{prop2}) we find that
\[
|S_{2}(T)|\,=\,\frac{T}{2\pi}\biggl(\log\frac{T}{2\pi
e}\biggr)\sum\limits_{n\leqslant \xi^{p}}\frac{d^{2}_{\kappa
p}(n;\xi)}{n}\,+\,O\bigl(\xi^{p}\sqrt{T}(\log
T)^{k^{2}+1}\bigr)\,\ll\, T(\log T)^{k^{2}+1}.
\]
Hence,
\[
\sum\limits_{0<t_{n}(\phi)\leqslant
T}\bigl|\zeta\bigl(\tfrac{1}{2}+it_{n}(\phi)\bigr)\bigr|^{2k}\,\ge\,\frac{\bigl(S_{1}(T)\bigr)^{2k}}{\bigl(S_{2}(T)\bigr)^{2k-1}}\,\gg\,
T(\log T)^{k^{2}+1}.
\]
Theorem \ref{Th1} is proved.

\subsection{Proof of Corollary \ref{Cor2}}

Our argument follows Soundararajan \cite{sou}. Taking $X=Y$, resp. $x_n=y_n$ in (\ref{DirPol}), we get 
$$
S_1(T)=\sum_{0<t_{n}(\phi)\leqslant T} \zeta\left({\textstyle \frac12 }-it_{n}(\phi)\right)\vert X\left({\textstyle \frac12}+it_{n}(\phi)\right)\vert^2
$$
for (\ref{s1}). Comparing with (\ref{s2}) we find
\begin{equation}\label{schoko}
|S_1(T)|\leq S_2(T)\max_{0<t_n\leq T}\vert \zeta({\textstyle{1\over 2}}+it_n(\phi))\vert .
\end{equation}
Now let $L=\exp(\sqrt{\log X\log\log X})$ where $X$ is a sufficiently large parameter which will be chosen later. Following Soundararajan \cite{sou}, we define $x_n=n^{1\over 2}f(n)$, where $f$ is the multiplicative function such that $f(p^k)=0$ for all primes $p$ and positive integers $k\geq 2$,
$$
f(p)={L\over \sqrt{p}\log p}
$$
for all primes $p$ satisfying $L^2\leq p\leq \exp((\log L)^2)$, and $f(p)=0$ for all other primes. We observe that
$$
{\mathcal X}_0=\max_{n\leq X}\sqrt{n}f(n)\leq L^m\prod_{j=1}^m{1\over \log p_j}, 
$$
where $p_1,\ldots,p_m$ are the least distinct $m$ prime numbers in $[L^2,\exp((\log L)^2]$ for which $n=p_1\cdot\ldots\cdot p_m\leq X$. 
Since $X\ge n\ge L^{2m}$ then $L^{m}\le X^{\frac12}$ and $\mathcal{X}_{0} <
L^{m}\le X^{\frac12}$.
Moreover, since $f(n)\leq 1$ for any $n$, we find
$$
{\mathcal X}_1=\sum_{n\leq X}{f(n)\over \sqrt{n}}
\leq \sum_{n\leq X}{1\over \sqrt{n}}
\ll X^{1\over 2}
$$   
as well as
\begin{align*}
\mathcal{X}_{2}=&\sum\limits_{n\le
X}\frac{|x_{n}|^{2}}{n}=\sum\limits_{n\le
X}f^{2}(n)=\sum\limits_{\substack{n = p_{1}\ldots p_{m}\le X
\\ L^{2}<p_{1}\ldots p_{m}\le e^{L^{2\mathstrut}}}}\frac{L^{2m}}{(p_{1}\log{p_{1}}\ldots
p_{m}\log{p_{m}})^{2}}\\
&\le\prod\limits_{L^{2}<p\le
e^{L^{2\mathstrut}}}\left(1+\frac{L^{2}}{p^{2}\log^{2}p}\right)<\exp{\biggl(L^{2}\sum\limits_{p>L^{2}}\frac{1}{p^{2}\log^{2}p}\biggr)}<e.
\end{align*}
Inserting these bounds in the asymptotic formulae of Proposition \ref{Prop1} yields
$$
S_1(T)=(1+e^{-2i\phi})\frac{T}{2\pi}\biggl(\log\frac{T}{2\pi e}\biggr)\sum\limits_{\substack{mn\leqslant X}}{f(m)f(mn)\over \sqrt{n}}
+O(X^2T^{1\over 2}(\log T)^2)
$$
and
$$
S_2(T)=\frac{T}{2\pi}\biggl(\log\frac{T}{2\pi e}\biggr)\sum\limits_{n\leqslant X}|f(n)|^{2}+O(X T^{1\over 2}(\log T)^2+X^2(\log T)^3).
$$
Let $X=T^{{1\over 4}-\epsilon}$, then the main terms in the latter formulae dominate the error terms and we may deduce from (\ref{schoko}) that 
$$
\max_{0<t_n\leq T}\vert \zeta({\textstyle{1\over 2}}+it_n(\phi))\vert\geq {|S_1(T)|\over S_2(T)}\gg \left(\sum\limits_{\substack{mn\leqslant X}}{f(m)f(mn)\over \sqrt{n}}\right)\left( \sum\limits_{n\leqslant X}|f(n)|^{2}\right)^{-1}.
$$ 
Soundararajan \cite{sou} proved that the right hand-side is $\geq \exp\left((1+o(1))\sqrt{{\log X\over \log\log X}}\right)$ which gives the desired estimate by letting $\epsilon \to 0$. 

\section{The third moment}

Corollary \ref{Cor2} shows that large values of the zeta-function appear on all straight lines through the origin with the imaginary axis as only possible exception. More subtle information on the value-distribution with respect to half-lines can be derived from the third discrete moment. Our first aim is the asymptotic formula of Theorem \ref{Th2}, and we conclude with the proof of Corollary \ref{Cor1}. 

\subsection{Proof of Theorem \ref{Th2}}

The method of proof is along the lines of Kalpokas and Steuding \cite{Kalp}. It suffices to evaluate 
\[
S(T)\,=\,\sum\limits_{c<t_{n}(\phi)\leqslant
T}\zeta\bigl(\tfrac{1}{2}+it_{n}(\phi)\bigr)\zeta^{2}\bigl(\tfrac{1}{2}-it_{n}(\phi)\bigr),
\]
where $c > 32\pi$ is an absolute constant and $T =\tfrac{1}{2}(t_{\nu}(\phi)+t_{\nu + 1}(\phi))$ for some $\nu$. Setting $a=1+(\log T)^{-1}$, we find by Cauchy's theorem
\begin{eqnarray*}
S(T)&=&\frac{1}{2\pi i}\left\{\int_{a+ic}^{a+iT}+\int_{a+iT}^{1-a+iT}+\int_{1-a+iT}^{1-a+ic}+\int_{1-a+ic}^{a+ic}\right\}\zeta(s)\zeta(1-s)^2\frac{\Delta'(s)}{\Delta(s)-e^{2i\phi}}\,ds\\
&=&\sum_{k=1}^{4}\mathcal{I}_k, 
\end{eqnarray*}
say. 

First we consider $\mathcal{I}_1$. In view of (\ref{star}) we obtain similar to the analogous case of $\mathcal{I}_1$ in Section \ref{square}
\begin{eqnarray*}
\mathcal{I}_1&=&
-\frac{e^{-2i\phi}}{2\pi i}\int_{a+ic}^{a+iT}\zeta^{3}(s)\Delta(1-s)\frac{\Delta'}{\Delta}(s)\biggl(1+\sum\limits_{k
=1}^{\infty}e^{-2ik\phi}\Delta^{k}(s)\biggr)ds\\
&=&-e^{-2i\phi}\bigl(j_{1}+e^{-2i\phi}j_{2}+j_{3}\bigr),
\end{eqnarray*}
where
\begin{align*}
j_{1}\,&=\,\frac{1}{2\pi i}\int_{a+ic}^{a+iT}\zeta^{3}(s)\Delta(1-s)\frac{\Delta'}{\Delta}(s)ds,\\
j_{2}\,&=\,\frac{1}{2\pi i}\int_{a+ic}^{a+iT}\zeta^{3}(s)\frac{\Delta'}{\Delta}(s)\,ds,\\
j_{3}\,&=\,\frac{1}{2\pi i}\int_{a+ic}^{a+iT}\zeta^{3}(s)\frac{\Delta'}{\Delta}(s)\sum\limits_{k
= 1}^{\infty}e^{-2i(k+1)\phi}\Delta^{k}(s)ds.
\end{align*}

By Gonek's Lemma \ref{gonekl} (with $m = 1$) and Lemma \ref{gendivprob} we have
\[
j_{1}\,=\,-\sum\limits_{n\le\frac{T}{2\pi}}d_{3}(n)\log n\,+\,O\bigl(T^{{1\over 2}+\varepsilon}\bigr)\,=\,-\frac{T}{2\pi}P_{3}\biggl(\log\frac{T}{2\pi}\biggr)\,+\,O\bigl(T^{{1\over 2}+\varepsilon}\bigr),
\]
where $P_3(x)$ is a computable polynomial of degree three.
 
 Next, Lemma \ref{NgLemma} with $X(s)\equiv 1$, $Y(s)\equiv 1$, applied to $j_{2}$, leads to
\[
j_{2}\,=\,-\frac{T}{2\pi}\biggl(\log\frac{T}{2\pi e}\biggr)\,+\,O\bigl((\log T)^{6}\bigr).
\]
Finally, by standard arguments we obtain
\[
j_{3}\,\ll\,\int_{c}^{T}\frac{\log t}{(a-1)^{3}}\frac{dt}{\sqrt{t}}\,\ll\,\sqrt{T}(\log T)^{4}.
\]
Hence,
$$
\mathcal{I}_1=e^{-2i\phi}\frac{T}{2\pi}\,P_{3}\biggl(\log\frac{T}{2\pi}\biggr)\,+\,e^{-4i\phi}\frac{T}{2\pi}\biggl(\log\frac{T}{2\pi
e}\biggr)\,+\,O\bigl(T^{{1\over 2}+\varepsilon}\bigr).
$$

Further, transforming the integral $\mathcal{I}_{3}$ we find
\begin{eqnarray*}
\overline{\mathcal{I}_{3}}&=&-\frac{1}{2\pi i}\int_{a+ic}^{a+iT}\zeta(1-s)\zeta^{2}(s)\frac{\Delta'(1-s)\,ds}{\Delta(1-s)-e^{-2i\phi}}\\
&=&-\frac{1}{2\pi i}\int_{a+ic}^{a+iT}\zeta^{3}(s)\Delta(1-s)\frac{\Delta'}{\Delta}(s)
\sum_{k=0}^{\infty}e^{-2i\phi}\Delta(s)^k ds.
\end{eqnarray*}
The latter expression equals $e^{2i\phi}\,\mathcal{I}_{1}$, hence we may deduce (do not forget to conjugate)
$$
\mathcal{I}_{3}=\frac{T}{2\pi}P_{3}\biggl(\log\frac{T}{2\pi}\biggr)\,+\,e^{2i\phi}\frac{T}{2\pi}\biggl(\log\frac{T}{2\pi e}\biggr)\,+\,O\bigl(T^{{1\over 2}+\varepsilon}\bigr).
$$

In order to estimate the integral $\mathcal{I}_{2}$ over the top and bottom edges we write
\begin{eqnarray*}
F(s)&=&\zeta^{3}(s)\Delta(1-s)\,\frac{\Delta'}{\Delta}(s)\,\frac{1}{\Delta(s)-e^{2i\phi\mathstrut}}\\
&=&\zeta^{3}(1-s)\Delta(s)\frac{\Delta'}{\Delta}(s)\biggl(1\,+\,\frac{e^{2i\phi}}{\Delta(s)-e^{2i\phi}}\biggr).
\end{eqnarray*}
Since $T = \tfrac{1}{2}(t_{\nu}(\phi)+t_{\nu+1}(\phi))$ for some $\nu$, the inequality $|\Delta(s)-e^{2i\phi}|>\tfrac{1}{3}$ from Paragraph 3.1.1 holds over the segment of integration. Using the bound $|\zeta(\sigma+it)|\ll t^{(1-\sigma)/3}\log t$, for $s = \sigma +iT$, $\tfrac{1}{2}\leqslant \sigma\leqslant a$ (see \cite{ivic}), we get
\[
|F(s)|\,\ll\,(\log T)|\zeta^{3}(s)\Delta(1-s)|\,\ll\,(\log T)\bigl(T^{\frac{1}{3\mathstrut}(1-\sigma)}\bigr)^{3}T^{\sigma -
\frac{1}{2\mathstrut}}\,\ll\,\sqrt{T}(\log T)^{4}.
\]
In the case $1-a\leqslant \sigma\leqslant \tfrac{1}{2}$ we have
\[
|F(s)|\,\ll\,(\log T)|\zeta^{3}(1-s)\Delta(s)|\,\ll\,(\log T)\bigl(T^{\frac{1}{3}(1-(1-\sigma))}\log
T\bigr)^{3}T^{\frac{1}{2}-\sigma}\,\ll\,\sqrt{T}(\log T)^{4}.
\]
Thus, $\mathcal{I}_{2}\ll \sqrt{T}(\log T)^{4}$. Finally, the bound $\mathcal{I}_{4} = O(1)$ is obvious. Collecting together the above
results, we obtain
\[
S(T)\,=\,(1+e^{-2i\phi})\frac{T}{2\pi}P_{3}\biggl(\log\frac{T}{2\pi}\biggr)\,+\,(e^{2i\phi}+e^{-4i\phi})\frac{T}{2\pi}\biggl(\log\frac{T}{2\pi e}\biggr)\,+\,O\bigl(T^{{1\over 2}+\varepsilon}\bigr)
\]
Now it remains to note that we must multiply $S(T)$ by $e^{4i\phi}$ to obtain
\[
\sum\limits_{0<t_{n}(\phi)\leqslant T}\zeta^{3}\bigl(\tfrac{1}{2}+it_{n}(\phi)\bigr)=e^{4i\phi}S(T)\,+\,O(1).
\]
The theorem is proved.
\medskip

\textsc{Remark.} It is possible to compute the coefficients of the polynomial $P_{3}$ as follows. Define the polynomial $P_{2}(u) =
A_{2}u^{2}+A_{1}u+A_{0}$ by the relation
\[
\sum\limits_{n\leqslant x}d_{3}(n)\,=\,xP_{2}(\log x)\,+\,o(x),
\]
which is a special case of the asymptotics from Lemma \ref{gendivprob} We get
\[
P_{2}(\log x)\,=\,\text{res}_{s =1}\biggl(\frac{x^{s}\zeta^{3}(s)}{s}\biggr)
\]
and hence
$
A_{2} = \tfrac{1}{2},\quad A_{1}\,=\,3\gamma - 1,\quad A_{0}\,=\,1+3(\gamma^{2}-\gamma+\gamma_{1}),
$
where $\gamma, \gamma_{1},\ldots$ are the coefficients of Laurent expansion
\[
\zeta(s)\,=\,\frac{1}{s-1}\,+\,\gamma\,+\,\gamma_{1}(s-1)\,+\ldots.
\]
Thus, using the definition of $P_{3}(u)$ and Abel's summation formula, we find 
\[
P_{3}(u)\,=\,uP_{2}(u)-P_{2}(u)+P_{2}^{'}(u)-P_{2}^{''}(u)\,=\,\sum\limits_{k= 0}^{3}B_{k}u^{k},
\]
where $B_{3}=A_{2}=\tfrac{1}{2}$, $B_{2}=A_{1}-A_{2}=3\gamma - \tfrac{3}{2}$, 
$B_{1}=A_{0}-A_{1}+2A_{2}=3\bigl(\gamma_{1}+(1-\gamma)^{2}\bigr)$ and 
$B_{0}=-B_{1}=-3\bigl(\gamma_{1}+(1-\gamma)^{2}\bigr)$. For the values of the coefficients $\gamma_{j}$ and $P_{2}$ we refer to \cite{Israilov}.

\subsection{Proof of Corollary \ref{Cor1}}

First we observe for any non-negative integer $\ell$
\begin{eqnarray*}
\lefteqn{\sum_{t_n^{\pm}(\phi)\leqslant T}\left|\zeta(\textstyle{\frac12}+it_n^{\pm}(\phi))\right|^{2\ell+1}}\\
&=& \frac12\sum_{t_n(\phi)\leqslant T}\left(\left|\zeta(\textstyle{\frac12}+it_n(\phi))\right|^{2\ell+1}\pm e^{-(2\ell+1)i\phi}\zeta\left(\textstyle{\frac12}+it_n(\phi)\right)^{2\ell+1}\right)
\end{eqnarray*}
(with the same choice of signs on either side of the equation). In view of Theorem \ref{Th2} and Theorem \ref{Th1} with $k={3\over 2}$ we get
$$
\sum_{t_n^{\pm}(\phi)\leqslant T}\left|\zeta(\textstyle{\frac12}+it_n^{\pm}(\phi))\right|^3 \gg T(\log T)^{13\over 4}.
$$
Since the number of intersection points $t_n(\phi)\leqslant T$ is bounded by $T\log T$ (see Theorem 1 from \cite{Kalp}) and 
\[
\sum_{0<t_n^\phi\leqslant T}\left |\zeta\left({\textstyle{\frac12}}+it_n(\phi)\right)\right |^2\ll T(\log T)^2
\]
(see Theorem 2 from  \cite{Kalp}), we find
\begin{align*}
\sum_{t_n^{\pm}(\phi)\leqslant T}\left|\zeta(\textstyle{\frac12}+it_n^{\pm}(\phi))\right|^3  & \ll 
\max_{t_n^{\pm}(\phi)\leqslant T} \left|\zeta(\textstyle{\frac12}+it_n^{\pm}(\phi))\right|
\sum_{0<t_n^\phi\leqslant T}\left|\zeta\left({\textstyle{\frac12}}+it_n(\phi)\right)\right|^2\\
&\ll \max_{t_n^{\pm}(\phi)\leqslant T} \left|\zeta(\textstyle{\frac12}+it_n^{\pm}(\phi))\right|
T(\log T)^2.
\end{align*}
Comparing both estimates we arrive at
$$
\max_{t_n^{\pm}(\phi)\leqslant T}\left|\zeta(\textstyle{\frac12}+it_n^{\pm}(\phi))\right|\gg (\log T)^{\frac54},
$$

If we assume Riemann Hypothesis we can use the following estimate (see Theorem 1 from \cite{Christ}) that holds for any non-negative real $k$, uniformly for $\phi\in[0,\pi)$, as $T\to\infty$,
\[
\sum_{0<t_n^\phi\leqslant T}\left |\zeta\left({\textstyle{\frac12}}+it_n(\phi)\right)\right |^{2k}\ll T(\log T)^{k^2+1+\epsilon}.
\]
We have 
\begin{align*}
T(\log T)^{13\over 4}\ll \sum_{t_n^{\pm}(\phi)\leqslant T}\left|\zeta(\textstyle{\frac12}+it_n^{\pm}(\phi))\right|^3  & \ll 
\max_{t_n^{\pm}(\phi)\leqslant T} \left|\zeta(\textstyle{\frac12}+it_n^{\pm}(\phi))\right|^{\alpha}
\sum_{0<t_n^\phi\leqslant T}\left|\zeta\left({\textstyle{\frac12}}+it_n(\phi)\right)\right|^{3-\alpha}\\
&\ll \max_{t_n^{\pm}(\phi)\leqslant T} \left|\zeta(\textstyle{\frac12}+it_n^{\pm}(\phi))\right|^{\alpha}
T(\log T)^{(\frac32-\frac{\alpha}{2})^2+1+\epsilon}.
\end{align*}
Comparing both sides we arrive at
$$
\max_{t_n^{\pm}(\phi)\leqslant T}\left|\zeta(\textstyle{\frac12}+it_n^{\pm}(\phi))\right|\gg (\log T)^{\frac32-\frac{\alpha}{4}-\frac{\epsilon}{\alpha}}.
$$
After choosing $\alpha = 2\delta$ and $\epsilon=\delta^2$, where $\delta$ is positive and arbitrary small we get
$$
\max_{t_n^{\pm}(\phi)\leqslant T}\left|\zeta(\textstyle{\frac12}+it_n^{\pm}(\phi))\right|\gg (\log T)^{\frac32-\delta}.
$$

which proves the corollary.

{\bf Acknowledgements: } The authors would like to thank prof. I.D.Shkredov for a suggestion that helped to improve the lower bound of Corollary \ref{Cor1}.
\small

\bigskip

\ \bigskip

\tiny

\noindent\parbox{7cm}{
Justas Kalpokas\\
Faculty of Mathematics and Informatics\\
Vilnius University\\
Naugarduko 24, 03225 Vilnius, Lithuania\\
justas.kalpokas@mif.vu.lt}
\medskip

\noindent
J\"orn Steuding\\
Department of Mathematics, W\"urzburg University\\
Am Hubland, 97\,218 W\"urzburg, Germany\\
steuding@mathematik.uni-wuerzburg.de
\medskip

\noindent
Maxim A. Korolev\\
Steklov Mathematical Institute\\
Gubkina str. 8, 119991, Moscow, Russia\\
hardy\_ramanujan@mail.ru, korolevma@mi.ras.ru

\end{document}